\documentclass[reqno,11pt]{amsart}

\usepackage{mathdots}
\usepackage{tikz}
\usepackage{tikz-cd}
\usetikzlibrary{automata}
\usepackage{amssymb}
\usepackage{amsgen}
\usepackage{amsmath}
\usepackage{amsthm}
\usepackage{mathrsfs}
\usepackage{cite}
\usepackage{amsfonts}
\usepackage{enumitem}
\usepackage{stmaryrd}

\newcommand{\sour}{\mathop{\boldsymbol s}}

\newcommand{\dom}{\mathop{\mathrm{dom}}\nolimits}
\newcommand{\ran}{\mathop{\boldsymbol r}\nolimits}

\newcommand{\pd}{\mathop{\mathrm{pd}}}

\newcommand{\inv}{^{-1}}

\newcommand{\wh}{\widehat}

\newcommand{\Hom}{\mathop{\mathrm{Hom}}\nolimits}
\newcommand{\Tor}{\mathop{\mathrm{Tor}}\nolimits}
\newcommand{\Ext}{\mathop{\mathrm{Ext}}\nolimits}

\usepackage{xcolor}

\newtheorem{Thm}{Theorem}[section]
\newtheorem{Prop}[Thm]{Proposition}

\newtheorem{Lemma}[Thm]{Lemma}
{\theoremstyle{definition}
}
{\theoremstyle{remark}
\newtheorem{Rmk}[Thm]{Remark}}
\newtheorem{Cor}[Thm]{Corollary}
{\theoremstyle{remark}
}
{\theoremstyle{remark}
}

{\theoremstyle{remark}
}
{\theoremstyle{remark}
}
{\theoremstyle{remark}
}
{\theoremstyle{remark}
\newtheorem*{Claim*}{Claim}}
\newtheorem*{theorema}{Theorem A}
\newtheorem*{theoremb}{Theorem B}
\newtheorem*{theoremc}{Theorem C}

\numberwithin{equation}{section}

\title{Partial actions of free groups and groupoid homology}

\author{Benjamin Steinberg}
\address[B.~Steinberg]{%
    Department of Mathematics\\
    City College of New York\\
    Convent Avenue at 138th Street\\
    New York, New York 10031\\
    USA}
\email{bsteinberg@ccny.cuny.edu}

\thanks{The author was supported by the NSF grant DMS-2452324, a Simons Foundation Collaboration Grant, award number 849561, the Australian Research Council Grant DP230103184, and Marsden Fund Grant MFP-VUW2411.}
\date{February 16, 2026}

\keywords{Partial group actions, groupoid homology, global dimension, free groups}
\subjclass[2020]{20J05,22A22,16E10}

\begin{document}

\begin{abstract}
We give a length one projective resolution of the trivial module for the groupoid of a semi-saturated partial action (in the sense of Exel) of a free group on a compact Hausdorff and totally disconnected space.  As a consequence we obtain an elementary computation of the homology of these groupoids, which include transformation groupoids of free group actions and  Deaconu-Renault groupoids of systems $(X,T)$ where $X$ is compact Hausdorff and totally disconnected and $T$ is a local homeomorphism with domain a clopen subset of $X$.  We also show that algebra of such a partial action groupoid over a field has global dimension at most $2$ when the space is second countable.
\end{abstract}

\maketitle

\section{Introduction}
Given a collection of partial homeomorphisms $\{\theta_a\colon X_{a\inv}\to X_a\mid a\in A\}$ of a space $X$, Exel~\cite{ExelBook17} defines a partial action of the free group $F_A$ on $X$ by sending a reduced word to the corresponding composition of partial homeomorphisms; such partial actions are called semi-saturated.  Associated to a partial group action of a group $G$ on a space $X$ is an \'etale groupoid, denoted $G\ltimes X$~\cite{abadie}.  In this paper we give an explicit length one projective resolution of the trivial module $\mathbb ZX$ for $F_A\ltimes X$ when $X$ is compact Hausdorff and totally disconnected and the partial action is semi-saturated; this resolution is flat when $X$ is merely locally compact Hausdorff and totally disconnected.  As a consequence, we obtain a completely elementary computation of the homology of such a groupoid.  Such groupoids include all groupoids of Deaconu-Renault systems $(X,T)$ with $X$ compact Hausdorff and totally disconnected and with the domain of $T$ clopen by an argument of G.~de Castro.   

More precisely, our main results are the following three theorems.

\begin{theorema}\label{t:resolution}
    Let the free group $F_A$ have a semi-saturated partial action on a compact  Hausdorff totally disconnected space $X$. Let $\mathcal G=F_A\ltimes X$.    Then there is a projective resolution 
    \[0\longrightarrow \bigoplus_{a\in A}\mathbb Z\mathcal G1_{X_a}\xrightarrow{\,\partial\,} \mathbb Z\mathcal G\xrightarrow{\ran_\ast}\mathbb ZX\longrightarrow  0\]
where $\partial$ is defined on the $a$-component by $\partial(f) = f\delta_{a}-f$ where $\delta_a$ is the characteristic function of $\{a\}\times X_{a\inv}$
\end{theorema}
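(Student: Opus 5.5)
We realize $\mathbb Z\mathcal G$ concretely and then split the argument into exactness at each of the three spots.

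\textbf{Setup.} View $\mathbb Z\mathcal G$ as locally constant compactly supported functions on $\mathcal G=\{(g,x): x\in X_{g\inv}\}$ with convolution. The left module $\mathbb Z\mathcal G1_{X_a}$ is projective because $1_{X_a}$ is an idempotent ($X_a$ is clopen). Since $X$ is compact, $1_X$ is the identity of $\mathbb Z\mathcal G$, so $\mathbb Z\mathcal G$ is free of rank one. The map $\ran_\ast$ sums over fibres of the range map, $\ran_\ast(f)(y)=\sum_{\ran(\gamma)=y}f(\gamma)$. It is a surjective module map: it sends $1_U$ to $1_U$ for clopen $U\subseteq X$, the unit space.

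\textbf{Complex and injectivity.} $\partial$ is well defined on the $a$-component because $\delta_a=1_{X_a}\delta_a$ (the range of $\{a\}\times X_{a\inv}$ is $X_a$), so $f\mapsto f\delta_a$ makes sense. It is a complex because $\ran_\ast(f\delta_a)=\ran_\ast(f)$: right multiplication by the bisection $\delta_a$ just reparametrises the arrows of $f$ with source in $X_a$ bijectively, keeping their ranges.

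For injectivity and exactness in the middle, decompose $\mathbb Z\mathcal G=\bigoplus_{g\in F_A}\mathbb Z\mathcal G_g$, where $\mathcal G_g=\{g\}\times X_{g\inv}$. Then $\mathbb Z\mathcal G1_{X_a}$ decomposes into components supported on $\{g\}\times(X_{g\inv}\cap X_a)$. Semi-saturation gives $\theta_g\theta_a=\theta_{ga}$ when $|ga|=|g|+1$, so right multiplication by $\delta_a$ moves the $g$-component to the $ga$-component, with support mapping onto $\{ga\}\times(\text{appropriate domain})$.

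For injectivity, suppose $\sum_a(f_a\delta_a-f_a)=0$. Take a word $g$ of maximal length such that some $f_a$ has nonzero $g$-component. If $ga$ is reduced, the $ga$-component of $f_a\delta_a$ cannot cancel against anything, since all other contributions have length $\le|g|$ or come from different letters; this forces the $g$-component to vanish. If $ga$ is not reduced, so $g=ha\inv$, then $f_a\delta_a$ at $g$ lands in length $|g|-1$, and the $-f_a$ term at $g$ must be cancelled by some $f_b\delta_b$ with $hb\cdot$ length considerations. A careful leading-term argument, ordering by length and using that the Cayley graph of $F_A$ is a tree, shows everything vanishes. This is exactly the combinatorics of the standard resolution $0\to\bigoplus_a\mathbb ZF_A\to\mathbb ZF_A\to\mathbb Z\to0$ for the free group, applied fibrewise.

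\textbf{Exactness in the middle (main obstacle).} Here I plan to use a tree-contraction argument. For each $g\in F_A$ with reduced form $g=a_1^{\epsilon_1}\cdots a_n^{\epsilon_n}$, construct an explicit element $c_g$ with $\partial(c_g)=\chi_g-\chi_{1}$, where $\chi_g$ denotes a function supported on $\{g\}\times U$ and $\chi_1$ the corresponding function on the units $\ran(\{g\}\times U)$. Do this by telescoping along the path from $1$ to $g$ in the tree, which is legitimate because by semi-saturation $\{g\}\times U$ is the product of the bisections of the letters restricted appropriately. Given $f\in\ker\ran_\ast$, write $f=\sum_g\chi_{g,U_g}$ as a finite sum over compact open pieces. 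Then $f-\sum_g\partial(c_g)$ is supported in the unit space and lies in $\ker\ran_\ast$, so it is $0$. The main work is checking that each telescoping term lies in the correct summand $\mathbb Z\mathcal G1_{X_a}$, using that inverse letters $a\inv$ contribute $-f\delta_a\inv$-type terms. These are handled by noting $f\delta_{a\inv}-f=-\partial(f\delta_{a\inv})$, with $f\delta_{a\inv}\in\mathbb Z\mathcal G1_{X_a}$ because the source of $\delta_{a\inv}$ lies in $X_a$.
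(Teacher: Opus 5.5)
Your proof is correct. For exactness at $\mathbb Z\mathcal G$ it is the paper's argument: your $c_g$ is the paper's $s_1(f\delta_g)$, the telescoping sum along the geodesic from $1$ to $g$, with the term $-f\delta_{wa\inv}\gamma_a$ for an inverse letter. Your remark that $f-\sum_g\partial(c_g)$ is a unit-supported element of $\ker\ran_\ast$, hence $0$, is the paper's identity $\partial s_1+s_0\ran_\ast=1$.

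The genuine difference is injectivity of $\partial$. The paper verifies $s_1\partial=1$ on generators $f\delta_w\gamma_a$, in the two cases $|wa|=|w|+1$ and $w=ua\inv$. So a single map gives a $\mathbb Z$-linear contracting homotopy and both exactness statements at once. Since you had already built $s_1$, this is the shorter route.

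Your leading-term argument in the Cayley tree also works, but the second case should be stated properly. Write $f_b=\sum_g f_{b,g}\delta_g$, and suppose $g=ha\inv$ has maximal length among words with some $f_{b,g}\neq 0$.
\begin{enumerate}
\item The only contributions of $\sum_b f_b\delta_b$ to the $g$-component come from the $gb\inv$-components of the $f_b$.
\item Since $g$ ends in $a\inv$, $|gb\inv|=|g|+1$ for every $b\in A$, so these vanish by maximality.
\item Your first case kills $f_{b,g}$ for $b\neq a$.
\end{enumerate}
Hence the $g$-component of $\partial$ is exactly $-f_{a,g}$, and nothing can cancel it; the issue is not that it ``must be cancelled''. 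Likewise, in the first case the competitors at $ga$ are the $ga$-components of the $f_b$ and the $gab\inv$-components for $b\neq a$. These vanish by maximality of $|g|$, not because they come from different letters.

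What your route buys is a sharper view of where the hypothesis enters. It uses only $\delta_g\delta_a=1_{\theta_g(X_{g\inv}\cap X_a)}\delta_{ga}$ and $\theta_g(X_{g\inv}\cap X_a)\subseteq X_{ga}$, both valid for every partial action of $F_A$. So $\partial$ is injective without semi-saturation, which you invoke there unnecessarily. Semi-saturation is needed only to make the telescoping in the middle possible, i.e.\ to ensure $f\in\mathbb ZX_w$ lies in $\mathbb ZX_u$ for every prefix $u$ of $w$.
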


\begin{theoremb}\label{t:homology.of.gpd}
    Let $\theta$ be a semi-saturated partial action of a free group $F_A$ on a compact Hausdorff totally disconnected space $X$.  Let $\iota_a\colon \mathbb ZX_a\to \mathbb ZX$ be extension by $0$ and $\theta^*_a\colon \mathbb ZX_a\to \mathbb ZX$ be $f\mapsto f\circ \theta_{a}$ (extended by $0$ outside $X_{a\inv}$).  Then $H_n(F_A\ltimes X)=0$ for $n\geq 2$ and
    \begin{align*}
        H_0(F_A\ltimes X) = \mathop{\mathrm{coker}}\left(\bigoplus_{a\in A} (\iota_a-\theta_a^*)\right),\\
        H_1(F_A\ltimes X) = \ker \left(\bigoplus_{a\in A} (\iota_a-\theta_a^*)\right).
    \end{align*}
\end{theoremb}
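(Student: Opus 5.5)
Theorem B follows from Theorem A by tensoring. Groupoid homology is $H_n(\mathcal G)=\Tor_n^{\mathbb Z\mathcal G}(\mathbb ZX,\mathbb ZX)$, where the left copy of $\mathbb ZX$ is a right module and the right copy is the trivial left module. So I will apply $\mathbb ZX\otimes_{\mathbb Z\mathcal G}(-)$ to the deleted resolution $0\to\bigoplus_{a}\mathbb Z\mathcal G1_{X_a}\xrightarrow{\partial}\mathbb Z\mathcal G\to 0$ of Theorem A. Because the resolution has length one, $H_n=0$ for $n\geq 2$ immediately. Moreover $H_0$ and $H_1$ are the cokernel and kernel of $1\otimes\partial$. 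The only real work is to identify this map with $\bigoplus_a(\iota_a-\theta_a^*)$.

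\textbf{Identifying the terms.} First, $\mathbb ZX\otimes_{\mathbb Z\mathcal G}\mathbb Z\mathcal G\cong\mathbb ZX$. Second, since $1_{X_a}$ is an idempotent in $\mathbb Z\mathcal G$, we have $\mathbb ZX\otimes_{\mathbb Z\mathcal G}\mathbb Z\mathcal G1_{X_a}\cong \mathbb ZX\cdot 1_{X_a}$. The right action of $\mathbb Z\mathcal G$ on $\mathbb ZX$ comes from the source map: for a compact open bisection $U$, $f\cdot 1_U$ is obtained by transporting $f$ along $U$, $x\mapsto f(\ran(\gamma))$ for $\gamma\in U$ with source $x$. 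For a unit set, $f\cdot 1_{X_a}=f1_{X_a}$, so this tensor product is $\mathbb ZX_a$, embedded in $\mathbb ZX$ by extension by zero. The convention for left versus right action must match the paper's definition of $\ran_\ast$. I will fix it as follows: the trivial left module is $\mathbb ZX$ with $1_U f = f\circ(\text{partial map } U)$ transported via $\ran$, and the right module is its mirror image via $\sour$.

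\textbf{Computing the map.} Under these identifications, $1\otimes\partial$ on the $a$-component sends $f\in\mathbb ZX_a$ to $f\cdot\delta_a-f$. Here $\delta_a=1_{\{a\}\times X_{a\inv}}$, and the arrow $(a,x)$ has source $x\in X_{a\inv}$ and range $\theta_a(x)\in X_a$. Hence $(f\cdot\delta_a)(x)=f(\theta_a(x))$ for $x\in X_{a\inv}$, and it is $0$ otherwise. That is $\theta_a^*f$. So the map is $f\mapsto\theta_a^*f-\iota_af$, which is the negative of $\iota_a-\theta_a^*$. Negation changes neither the kernel nor the cokernel, so both formulas follow.

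\textbf{Main obstacle.} The one delicate step is bookkeeping: checking that the identification $\mathbb ZX\otimes_{\mathbb Z\mathcal G}\mathbb Z\mathcal G e\cong \mathbb ZX e$ is compatible with right multiplication by $\delta_a$. The point is that $f\otimes g\delta_a$ must correspond to $(f\cdot g)\cdot\delta_a$. This holds by associativity of the tensor product. After that, it remains only to verify, on the basis element $1_{\{a\}\times V}$, that the source/range conventions give composition with $\theta_a$ rather than $\theta_{a\inv}$, in agreement with the definition of $\theta_a^*$ in the statement.
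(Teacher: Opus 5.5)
Your proposal is correct and takes essentially the paper's route. You tensor the deleted resolution of Theorem A with the right trivial module, identify $\mathbb ZX\otimes_{\mathbb Z\mathcal G}\mathbb Z\mathcal G1_{X_a}\cong\mathbb ZX_a$, and check that $1\otimes\partial$ sends $f$ to $f\cdot\delta_a-f=\theta_a^*f-\iota_af$, where $(g\cdot\delta_a)(x)=g(\theta_a(x))$ on $X_{a^{-1}}$ and the overall sign does not affect kernel or cokernel. The paper does the same computation by moving $f\delta_a$ across the tensor sign, writing $1_X\otimes f\delta_a=\delta_{a^{-1}}f\delta_a\otimes 1_{X_{a^{-1}}}$.
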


This leads to a computation of the homology of Deaconu-Renault group\-oids without assuming the local homeomorphism is surjective or totally defined (but with restrictions on $X$).  See Corollary~\ref{c:deaconu-renault} and Remark~\ref{rmk:more.general}.

\begin{theoremc}
    Let the free group $F_A$ have a semi-saturated partial action on a second countable compact Hausdorff and totally disconnected space $X$.  Then, for any field $K$, the algebra $K[F_A\ltimes X]\cong KX\rtimes F_A$ has global dimension at most $2$.
\end{theoremc}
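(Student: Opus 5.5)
The statement is Theorem C: the global dimension bound. The plan is to combine Theorem A with a standard change-of-rings argument for Steinberg algebras of ample groupoids. I will use the fact that left $K\mathcal G$-modules, for $\mathcal G=F_A\ltimes X$, can be tested against the trivial module by tensoring over $K$.

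Tensor the resolution of Theorem A with $K$. Because all modules there are free abelian, this gives a length one resolution
\[0\to \bigoplus_{a\in A} K\mathcal G1_{X_a}\to K\mathcal G\to KX\to 0\]
by projective $K\mathcal G$-modules. Each $K\mathcal G1_{X_a}$ is a direct summand of $K\mathcal G$, since $1_{X_a}$ is an idempotent; $X$ is compact, so $K\mathcal G$ is unital. Hence $\pd_{K\mathcal G} KX\le 1$.

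Next take an arbitrary left $K\mathcal G$-module $M$. Since $X$ is compact, $K\mathcal G$ is unital and $M$ is a unitary $KX$-module. The key is the diagonal construction, the groupoid analogue of $M\otimes_K-$ for Hopf algebras. Here $KX\otimes_K M$ gets replaced by $KX\otimes_{KX}M\cong M$. For a projective module $P=K\mathcal G e$, one forms $P\otimes_{KX} M$, where $KX$ acts on $P$ by right multiplication, and equips it with the diagonal $\mathcal G$-action, with arrows acting on both factors. I then show the following: this module is a direct summand of a direct sum of modules $K\mathcal G\otimes_{KX} N$ with $N$ a $KX$-module, i.e.\ of modules induced from $KX$.

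Applying $-\otimes_{KX} M$ to the resolution above stays exact, since the modules involved are $KX$-flat: $K\mathcal G$ is a free-ish, locally projective right $KX$-module because the bisections are compact open. This yields a length one resolution of $M$ by modules of the form $K\mathcal G\otimes_{KX} M'$. It then suffices to bound the projective dimension of induced modules $K\mathcal G\otimes_{KX}N$ by $\pd_{KX}N$. That bound holds because $K\mathcal G$ is a projective right $KX$-module, so induction preserves projectives and is exact.

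Finally I need $\pd_{KX}N\le 1$ for every $KX$-module $N$. Here second countability is used. $KX$ is a commutative von Neumann regular ring of countable dimension: it is the locally constant functions with finite image, and $X$ has a countable basis of clopens. Every ideal is countably generated by idempotents, hence is a countable direct limit of projective summands, and thus has projective dimension at most $1$. So $KX$ is hereditary-type with global dimension at most $1$; this is the classical fact that countable regular rings have global dimension $\le1$. Combining the steps gives $\pd_{K\mathcal G}M\le 1+1=2$. The isomorphism $K[F_A\ltimes X]\cong KX\rtimes F_A$ is standard from the literature on partial skew group rings.

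The main obstacle is the diagonal-action step. One must verify that the diagonal $\mathcal G$-action on $K\mathcal G\otimes_{KX}M$ is well defined, and that the resulting module is isomorphic to the induced module $K\mathcal G\otimes_{KX}\mathrm{Res}\,M$, via an untwisting map $f\otimes m\mapsto$ the sum over arrows $g$ of $g\otimes g^{-1}m$. I would check that isomorphism first on characteristic functions of compact open bisections, where it reduces to a computation with partial homeomorphisms.
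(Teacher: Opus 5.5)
Your argument is correct in substance, but it takes a different route from the paper.

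\textbf{The paper's route.} The paper only bounds $\pd L$ for left ideals $L$ and then uses Auslander's theorem. Since $L$ sits inside the $KX$-projective module $K\mathcal G$ and $KX$ is hereditary, $L$ is $KX$-projective. The spectral sequence $\Ext^p_{K\mathcal G}(KX,\Ext^q_{KX}(L,N))\Rightarrow\Ext^{p+q}_{K\mathcal G}(L,N)$, built from a hom--tensor adjunction for the diagonal structure, therefore collapses to $\Ext^n_{K\mathcal G}(L,N)\cong\Ext^n_{K\mathcal G}(KX,\Hom_{KX}(L,N))$. This vanishes for $n\ge 2$ because $\pd_{K\mathcal G}KX\le 1$.

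\textbf{Your route.} You tensor the length one resolution of $KX$ diagonally with an arbitrary $M$. This gives a length one resolution of $M$ by summands of induced modules, which you bound using $\mathrm{gl.dim}\,KX\le 1$. It is the groupoid version of the classical proof of $\mathrm{gl.dim}\,RG\le\mathrm{cd}\,G+\mathrm{gl.dim}\,R$. It yields $\pd M\le 2$ for every $M$ at once, with no spectral sequence and no Auslander theorem.

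The price is the untwisting isomorphism, which the paper never uses. It does hold: $1_U\otimes m\mapsto 1_U\otimes 1_{U\inv}m$ on compact open bisections $U$ is well defined and $K\mathcal G$-linear, by exactly the bisection computations you propose. You can also avoid untwisting. Take $0\to Q_1\to Q_0\to M\to 0$ with $Q_0$ free over $K\mathcal G$; then $Q_1$ is $KX$-projective by heredity of $KX$. Corollary~\ref{p:tensor.proj} then gives $\pd(P\otimes_{KX}M)\le 1$. Both proofs rest on the same three inputs: $\pd_{K\mathcal G}KX\le 1$, projectivity of $K\mathcal G$ over $KX$, and heredity of $KX$.

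\textbf{Two points to correct.}

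First, in the diagonal construction $KX$ must act on $P$ through the restriction of the left $K\mathcal G$-action (the range side), as in the paper. If $KX$ acts by right multiplication (the source side), then $s(p\otimes m)=sp\otimes sm$ is not well defined. This already fails for the pair groupoid on two points with $M=KX$. The source-side structure is the one belonging to the induced module $K\mathcal G\otimes_{KX}N$, and untwisting passes from one structure to the other.

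Correspondingly, exactness of $0\to P_1\otimes_{KX}M\to P_0\otimes_{KX}M\to M\to 0$ comes from a different fact than the one you cite. The sequence $0\to P_1\to P_0\to KX\to 0$ splits over $KX$ for the range-side structure, since $KX$ is free over itself. Right flatness of $K\mathcal G$ is what makes induction exact, not what is needed here.

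Second, knowing only that ideals of $KX$ have projective dimension at most $1$ would give $\mathrm{gl.dim}\,KX\le 2$, and hence a final bound of $3$. You need the ideals to be projective, and this is what Kaplansky's theorem provides. A countably generated ideal equals $\bigcup_n e_nKX$ for idempotents $e_1\le e_2\le\cdots$, hence equals $\bigoplus_n(e_n-e_{n-1})KX$ with $e_0=0$. The hypothesis needed is that ideals are countably generated, which countable $K$-dimension gives; countability of the ring $KX$ itself is not required.
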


Theorem~C is derived from Theorem~A using a spectral sequence connecting $\Ext$ with groupoid cohomology that may be of interest in its own right.
\subsection*{Acknowledgments}
The author thanks Gilles Gon\c{c}alves de Castro for permission to use his proof of Theorem~\ref{t:castro}.

\section{Partial group actions}

\subsection{Definition of a partial action}
See~\cite{LawsonKellendonk} or~\cite{ExelBook17} for basics on partial group actions. 

Recall that an inverse semigroup is a semigroup $S$ such that, for each $s\in S$, there is a unique $s^*\in S$ such that $ss^*s=s$ and $s^*ss^*=s^*$.
If $S,T$ are inverse monoids, a dual prehomomorphism $\theta\colon S\to T$ is map  such that: 
\begin{enumerate}
    \item $\theta(1)=1$;
   \item $\theta(s^*) = \theta(s)^*$;
    \item $\theta(s)\theta(t)\leq \theta(st)$.
\end{enumerate}
  Here we recall that an inverse semigroup is partially ordered via $s\leq t$ if $ss^*t=s$ or, equivalently, $ts^*s=s$

If $X$ is a locally compact Hausdorff totally disconnected space, $I_X$ denotes the inverse semigroup of partial homeomorphisms of $X$ with clopen domains.  

A partial action of a group $G$ on $X$ is a dual prehomorphism $\theta\colon G\to I_X$.  What this entails is, for each $g\in G$, a homeomorphism $\theta_g\colon X_{g\inv}\to X_g$ between clopen subsets of $X$ such that $\theta_1=1_X$, $\theta_g\inv=\theta_{g\inv}$ and whenever $\theta_g\theta_h(x)$ is defined, it equals $\theta_{gh}(x)$.  

Let $F_A$ be a free group on $A$.  Exel~\cite{ExelBook17} calls a partial action $\theta$ of $F_A$ on $X$ \emph{semi-saturated} if $\theta_{uv}=\theta_u\circ \theta_v$ whenever $|uv|=|u|+|v|$, that is, there is no cancellation in forming the product $uv$.  Semi-saturated actions are determined uniquely by a collection of partial homeomorphisms $\theta_a\colon X_{a\inv}\to X_a$, for $a\in A$.  The action for an element with reduced form $w=a_1^{\epsilon_1}\cdots a_n^{\epsilon_n}$ must be given by $\theta_w=\theta_{a_1}^{\epsilon_1}\cdots \theta_{a_n}^{\epsilon_n}$. See~\cite[Proposition~4.10]{ExelBook17} for details.

One way to see this is to note that this collection gives us a homomorphism $\alpha\colon FIM_A\to I_X$, where $FIM_A$ is the free inverse monoid on $A$, sending $a$ to $\theta_a$.  On the other hand, there is a well-known dual prehomomorphism $\gamma\colon F_A\to FIM_A$ sending an element $w\in F_A$ to the element of $FIM_A$ represented by the reduced form of $w$, that is, the element with Munn tree~\cite{Lawson} a straight-line labeled by $w$.  Then the above partial action is $\alpha\gamma$.

Elementary properties of semi-saturated actions that we shall use throughout, often without comment, are the following.

\begin{Prop}\label{p:prefix.contain}
    Suppose that $\theta\colon F_A\to I_X$ is a semi-saturated action.  
    \begin{enumerate}
        \item  If $w=uv$ with $|w|=|u|+|v|$, then $X_w\subseteq X_u$.
        \item $\theta_u(X_{u\inv} \cap X_v)\subseteq X_{uv}$ with equality if $|uv|=|u|+|v|$.
    \end{enumerate}
\end{Prop}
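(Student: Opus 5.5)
Both parts follow from unwinding the definitions of a partial action and of semi-saturation. Throughout we use that $X_g$ is the range of $\theta_g$ and $X_{g\inv}$ its domain. We also use the standard fact about compositions of partial maps: the domain of $\theta_u\circ\theta_v$ is $\theta_v\inv(X_{v\inv}\cap X_u)=\theta_{v\inv}(X_v\cap X_{u\inv})$, and its range is $\theta_u(X_{u\inv}\cap X_v)$.

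For (1), suppose $w=uv$ with $|w|=|u|+|v|$. Semi-saturation gives $\theta_w=\theta_u\circ\theta_v$. The range of a composite $\theta_u\circ\theta_v$ is contained in the range of $\theta_u$, so
\[X_w=\operatorname{ran}(\theta_w)=\operatorname{ran}(\theta_u\circ\theta_v)\subseteq \operatorname{ran}(\theta_u)=X_u.\]

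For (2), the containment holds for any partial action. Let $y=\theta_u(x)$ with $x\in X_{u\inv}\cap X_v$. Write $x=\theta_v(z)$ with $z\in X_{v\inv}$. Then $\theta_u\theta_v(z)$ is defined, so by the defining property of a partial action it equals $\theta_{uv}(z)$. Hence $y$ lies in the range of $\theta_{uv}$, which is $X_{uv}$.

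For equality when $|uv|=|u|+|v|$, semi-saturation gives $\theta_{uv}=\theta_u\circ\theta_v$ as partial maps, including equality of domains. Therefore
\[X_{uv}=\operatorname{ran}(\theta_u\circ\theta_v)=\theta_u(X_{u\inv}\cap X_v),\]
using the range formula for a composite stated above. The one point that needs care is that ``$\theta_{uv}=\theta_u\circ\theta_v$'' is read as equality of partial homeomorphisms, so that domains agree and not merely values on a common domain. This is exactly what semi-saturation asserts, and it is also what the construction via the homomorphism $\alpha\colon FIM_A\to I_X$ gives.
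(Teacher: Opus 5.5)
Your proof is correct and follows the paper's argument essentially verbatim: (1) comes from $\theta_w=\theta_u\circ\theta_v$, and (2) uses the same element chase, with equality obtained by reading $\theta_{uv}=\theta_u\circ\theta_v$ as an equality in $I_X$, domains included. One harmless slip: the domain of $\theta_u\circ\theta_v$ is the preimage under $\theta_v$ of $X_v\cap X_{u\inv}$, not of $X_{v\inv}\cap X_u$; your second expression $\theta_{v\inv}(X_v\cap X_{u\inv})$ is right, and the domain formula is never used.
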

\begin{proof}
    This first item is immediate as $\theta_w = \theta_u\circ \theta_v$.  For the second, if $x\in X_{u\inv}\cap X_v$, write $x=\theta_v(y)$.  Then $\theta_u(x) = \theta_u(\theta_v(y))=\theta_{uv}(y)$, and so $\theta_u(x)\in X_{uv}$.  If $|uv|=|u|+|v|$, then $\theta_{uv} = \theta_u\circ\theta_v$.   Thus, if $z\in X_{uv}$, then $z=\theta_{uv}(y) = \theta_u(\theta_v(y))$.  Therefore, $\theta_v(y)\in X_{u\inv}\cap X_v$, and so $z\in \theta_u(X_{u\inv}\cap X_v)$.
\end{proof}

Any total action of $F_A$ is, of course, a semi-saturated partial action.

Important examples of semi-saturated partial actions come from graphs.  We follow here the southern hemisphere convention on graphs: so an edge is drawn $\ran(e)\xleftarrow{\,e\,}\sour(e)$  Let $E=(E^0,E^1)$ be a graph and let $\partial E$ be the boundary path space.   For each edge $e$ put $X_e = e\partial E$ and $X_{e\inv} =\sour(e)\partial E$.  Then $\theta_e(z) = ez$.   For the corresponding semi-saturated action of $F_{E^1}$, one can check that only reduced words of the form $pq\inv$ where $p,q$ are paths with $\sour(p)=\sour(q)$ have nonempty domain. This element has domain $q\partial E$ and range $p\partial E$. 

\subsection{The groupoid of a partial group action}
A topological groupoid $\mathcal G$ is \emph{\'etale} if the source map is a local homeomorphism and the unit (or object) space $\mathcal G^0$ is locally compact Hausdorff. If, in addition,  $\mathcal G^0$ is totally disconnected, the groupoid is said to be \emph{ample}.   Recall that a bisection of $\mathcal{ G}$ is an open subset $U$ on which the source and range maps are both injective.  The compact open bisections of an ample groupoid form a basis for the topology and an inverse semigroup under multiplication.

Abadie~\cite{abadie} associated a Hausdorff \'etale groupoid $G\ltimes X$ to a partial group action of $G$ on $X$.
The object space is $X$ and the arrow space is $\{(g,x)\mid x\in X_{g\inv}\}\subseteq G\times X$ with the subspace topology coming from the product topology.  One has $\sour(g,x)=x$, $\ran(g,x)=\theta_g(x)$, $(g,\theta_h(x))(h,x) = (gh,x)$ and $(g,x)\inv = (g\inv,\theta_g(x))$.   The identities are $(1,x)$ with $x\in X$.

If $E$ is a graph, and we consider the semi-saturated partial action of $F_{E^1}$ on $\partial E$, then $F_{E^1}\ltimes \partial E$ is well-known to be isomorphic to the usual graph groupoid $\mathcal G_E$ via  $(pq\inv, qz)\mapsto (pz, |p|-|q|, qz)$.  

More generally, the groupoid of any Deaconu-Renault system $(X,T)$ with $X$ compact Hausdorff and totally disconnected and with the domain of $T$ clopen is the groupoid of a semi-saturated partial action of  a free group.   Recall that a Deaconu-Renault system consists of a locally compact Hausdorff space $X$  and a local homeomorphism $T\colon \dom(T)\to X$ where $\dom(T)$ is an open subset of $X$.   The Deaconu-Renault groupoid $\mathcal G_{(X,T)}$ has object space $X$ and arrows \[\{(x,m-n,y)\in \dom(T^m)\times \mathbb Z\times \dom(T^n)\mid T^m(x)=T^n(y)\}\] with $\sour(x,k,y)=y$, $\ran(x,k,y)=x$. The topology has basis the bisections of the form \[(U,(m,n),V) = \{(x,m-n,y)\in U\times \mathbb Z\times V\mid T^m(x)=T^n(y)\}\] with $U\subseteq \dom(T^m)$, $V\subseteq \dom(T^n)$ open, $T|_U,T|_V$ injective and $T^m(U)=T^n(V)$.   The graph groupoid is the Deaconu-Renault system of the shift map on the boundary path space $\partial E$.

A partial action of a free group $F_A$ on a space $X$ is called \emph{orthogonal} if $X_a\cap X_b=\emptyset$ for $a\neq b\in A$~\cite{subshift.semigroups}.
The argument below was given to me by Gilles de Castro (private communication).
\begin{Thm}\label{t:castro}
    The groupoids of Deaconu-Renault systems $(X,T)$ with $X$ compact Hausdorff and totally disconnected and $\dom(T)$ clopen are precisely the groupoids of semi-saturated orthogonal partial actions of finitely generated free groups on compact Hausdorff and totally disconnected spaces with clopen domains.
\end{Thm}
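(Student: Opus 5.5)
Both directions of the equivalence come from one dictionary. A semi-saturated orthogonal partial action of $F_A$ with $A$ finite corresponds to a pair $(X,T)$ in which $\dom(T)=\bigcup_{a\in A}X_a$ and $T|_{X_a}=\theta_a\inv$. The orthogonality hypothesis is exactly what makes $T$ well defined.

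\emph{From a Deaconu-Renault system to a partial action.} Let $(X,T)$ be given with $X$ compact Hausdorff totally disconnected and $\dom(T)$ clopen. Then $\dom(T)$ is compact, and $T$ is a local homeomorphism, so we can cover $\dom(T)$ by finitely many compact open sets on which $T$ is injective. Refining this cover, we get a finite partition $\dom(T)=U_1\sqcup\cdots\sqcup U_k$ into clopen sets with $T|_{U_i}$ injective. Each image $T(U_i)$ is compact open, hence clopen. Take $A=\{1,\dots,k\}$, set $X_i=U_i$ and $X_{i\inv}=T(U_i)$, and let $\theta_i=(T|_{U_i})\inv$. This gives a semi-saturated partial action of $F_A$, and it is orthogonal since the $U_i$ are disjoint.

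We then show $F_A\ltimes X\cong\mathcal G_{(X,T)}$. A reduced word with nonempty domain must have the form $pq\inv$, where $p,q$ are positive words. This holds because orthogonality forces $X_{a\inv}\cap X_b$ to be mapped into $X_{a\inv b}$. If a factor $a\inv b$ with $a\ne b$ occurred inside a composable chain, then applying $\theta_a$ to a point of $X_{a\inv}\cap X_b$ would yield a point lying in $X_a\cap\theta_a(X_b)$. This is only an issue for a reduced subword $b\,a\inv$ read the other way, so the real content is that a subword $a\inv b$ forces $\theta_a\theta_b$ to meet $X_a\cap X_b=\emptyset$. I would check the orientation carefully, as in the graph case described above.

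For a positive word $p$ of length $m$ we have $\theta_p\inv=T^m$ on $X_p$. Define
\[(pq\inv,x)\mapsto(\theta_{pq\inv}(x),\,|p|-|q|,\,x).\]
This map is well defined and is a continuous groupoid homomorphism. It is injective because the letters of $p$ are recovered from which $U_i$ contain the successive iterates $T^j(y)$. It is surjective because $T^m(y)=T^n(x)$ determines $p$ and $q$ by the itinerary. It is open because basic bisections map to basic bisections $(U,(m,n),V)$.

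\emph{From a partial action to a Deaconu-Renault system.} Conversely, suppose the action is orthogonal and semi-saturated, with $A$ finite and all $X_a$, $X_{a\inv}$ clopen. Set $\dom(T)=\bigsqcup_a X_a$, which is clopen as a finite union. Define $T|_{X_a}=\theta_a\inv$; this is a local homeomorphism. The same map as above then gives the isomorphism $F_A\ltimes X\cong\mathcal G_{(X,T)}$.

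\emph{Main obstacle.} The step I expect to be hardest is proving that only words $pq\inv$ have nonempty domain, using orthogonality together with semi-saturation via Proposition~\ref{p:prefix.contain}. The key point is that a reduced word containing a subword $a\inv b$ with $a\neq b$ has empty domain. Indeed, semi-saturation gives $X_{a\inv b}=\theta_{a\inv}(X_a\cap X_b)=\emptyset$, and then Proposition~\ref{p:prefix.contain}(1) shows that any reduced word containing $a\inv b$, read as a suitable prefix of the word or of its inverse, has empty range or domain. Once this is settled, the remaining verification is that the map respects topologies, where the bisections $\{pq\inv\}\times V$ correspond to the bisections $(\theta_{pq\inv}(V),(|p|,|q|),V)$.
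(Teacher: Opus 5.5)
Your proof is correct, and its skeleton is the paper's. You use the same dictionary $T|_{X_a}=\theta_a^{-1}$, with orthogonality making $T$ well defined. For the converse you use the same finite clopen partition of $\dom(T)$ into injectivity sets $U_i$, with $\theta_{a_i}=(T|_{U_i})^{-1}$.

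The real difference is in the groupoid isomorphism. The paper does not prove $F_A\ltimes X\cong\mathcal G_{(X,T)}$ itself but cites \cite[Theorem~9.6]{subshift.semigroups}. You prove it by hand in two steps:
\begin{enumerate}
\item first the normal form $pq^{-1}$ for words with nonempty domain;
\item then the explicit map $(pq^{-1},x)\mapsto(\theta_{pq^{-1}}(x),|p|-|q|,x)$, which extends the graph-groupoid isomorphism recalled in Section~2.
\end{enumerate}
Your route makes the theorem self-contained. The paper's is shorter but outsources the one substantive step.

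Your middle paragraph on orientation is garbled, but your final normal-form argument is right. Most cleanly: for reduced $w=ua^{-1}bv$, semi-saturation gives $\theta_w=\theta_u\theta_{a^{-1}}\theta_b\theta_v$. The composite $\theta_{a^{-1}}\theta_b$ has empty domain because $X_a\cap X_b=\emptyset$.

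Two steps of the isomorphism need one more sentence each.

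\emph{Injectivity.} The integer $m-n$ does not determine $(m,n)$, so recovering letters from itineraries is not quite enough as stated. Argue instead as follows. If $(w,x)$ and $(w',x)$ have the same image, then $g=w'^{-1}w$ satisfies $\theta_g(x)=x$ and has exponent sum $0$. So $g=pq^{-1}$ reduced with $|p|=|q|$, and $x\in X_q\cap X_p$, since $x\in X_{g^{-1}}\subseteq X_q$ and $x=\theta_g(x)\in X_g\subseteq X_p$. Orthogonality makes both $p$ and $q$ the length-$|p|$ itinerary of $x$. Hence $p=q$, and reducedness forces $g=1$.

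\emph{Surjectivity.} The word $pq^{-1}$ read off from the itineraries of $y$ and $x$ need not be reduced. Replace it by its reduced form $w$. This $w$ has exponent sum $m-n$ and satisfies $\theta_w(x)=\theta_p\theta_{q^{-1}}(x)=y$, since $\theta$ is a partial action.
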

\begin{proof}
    If $A$ is finite and $\theta$ is an orthogonal and semi-saturated partial action of $F_A$ on a compact Hausdorff and totally disconnected $X$, then we can define a local homeomorphism with clopen domain $T\colon \bigcup_{a\in A} X_a\to X$ by $T|_{X_a} = \theta_{a\inv}$, using orthogonality of the action.  Then~\cite[Theorem~9.6]{subshift.semigroups}  shows that $F_A\ltimes X\cong \mathcal G_{(X,T)}$ (see also~\cite[Theorem~4.1]{semisatcastro}).   
    
    Conversely, if $(X,T)$ is a Deaconu-Renault system with with the domain $\dom(T)$ of $T$ clopen (hence compact), then we can partition $\dom(T)$ into finitely many clopen sets $U_1\sqcup\cdots \sqcup U_n$ with $T|_{U_i}$ a homeomorphism with its image $V_i$.  We can then define a semi-saturated orthogonal partial action of the free group on $a_1,\ldots, a_n$ on $X$ by putting $X_{a_i}=U_i$, $X_{a_i\inv} = V_i$ and $\theta_{a_i} = (T|_{U_i})\inv$ using~\cite[Proposition~4.10]{ExelBook17}.  Note then that $T=\bigcup_{i=1}^n\theta_{a_i\inv}$, and so $F_A\ltimes X\cong \mathcal G_{(X,T)}$ by the previous paragraph.
\end{proof}

We now show that groupoids  $F_A\ltimes X$ of semi-saturated partial actions are exactly the groupoids of germs $FIM_A\ltimes X$ for actions of the free inverse monoid $FIM_A$ on $A$.  See~\cite{Lawson} for more on the free inverse monoid and~\cite{Exel} for groupoids of germs.

\begin{Prop}\label{p:fim}
    Let $\{\theta_a\colon X_{a\inv}\to X_A\colon a\in A\}$ be a collection of homeomorphisms of a locally compact Hausdorff totally disconnected space $X$ with the $X_a$ clopen.  Then $F_A\ltimes X\cong FIM_A\ltimes X$, where the groupoid of germs $FIM_A\ltimes X$ is taken with respect to the action of $FIM_A$ on $X$ given by  $a\mapsto \theta_a$.
\end{Prop}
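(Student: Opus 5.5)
The plan is to build an explicit continuous bijective functor $\Phi\colon F_A\ltimes X\to FIM_A\ltimes X$, check that it is a homeomorphism on arrows, and note that it is the identity on units. Recall that $FIM_A\ltimes X$ consists of germs $[s,x]$ with $s\in FIM_A$ and $x\in\dom(\alpha(s))$. Two germs $[s,x]$ and $[t,x]$ are equal if there is an idempotent $e$ with $x\in\dom(\alpha(e))$ and $se=te$. Define
\[
\Phi(w,x)=[\gamma(w),x],
\]
where $\gamma(w)$ is the reduced word of $w$ viewed in $FIM_A$, that is, its linear Munn tree. Since $\alpha\gamma(w)=\theta_w$ has domain $X_{w\inv}$, this is well defined. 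It is clearly compatible with source and range.

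\textbf{Functoriality.} Take composable arrows $(g,\theta_h(x))$ and $(h,x)$. In $FIM_A$ the product $\gamma(g)\gamma(h)$ is obtained by reducing the word $gh$, but its Munn tree carries an extra branch. Hence $\gamma(g)\gamma(h)=\gamma(gh)e$ for the idempotent
\[
e=\gamma(h)^*\gamma(g)^*\gamma(g)\gamma(h),
\]
using that $\gamma$ is a dual prehomomorphism, so $\gamma(g)\gamma(h)\le\gamma(gh)$. The point $x$ lies in the domain of $\alpha(\gamma(g)\gamma(h))=\theta_g\theta_h$. Therefore $[\gamma(g),\theta_h(x)][\gamma(h),x]=[\gamma(g)\gamma(h),x]=[\gamma(gh),x]$, which gives functoriality. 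Compatibility with inverses is immediate because $\gamma(w\inv)=\gamma(w)^*$.

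\textbf{Bijectivity.} For surjectivity, every element $s\in FIM_A$ is of the form $\gamma(w)e$, where $w$ is the group image of $s$ (the geodesic in its Munn tree) and $e$ is an idempotent. This follows from the identity $s\le \gamma(\sigma(s))$, where $\sigma$ is the natural map to $F_A$. Hence $[s,x]=[\gamma(w),x]$ whenever $x\in\dom\alpha(s)$. For injectivity, suppose $[\gamma(w),x]=[\gamma(v),x]$. Then $\gamma(w)e=\gamma(v)e$ for some idempotent $e$. Applying the group homomorphism $\sigma\colon FIM_A\to F_A$, which kills idempotents, gives $w=v$.

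\textbf{Homeomorphism.} I expect the topology to be the main point requiring care. A basis of $F_A\ltimes X$ consists of the sets $\{w\}\times U$ with $U\subseteq X_{w\inv}$ clopen. A basis of the germ groupoid consists of the sets $\{[s,x]: x\in U\}$ with $U\subseteq\dom\alpha(s)$ open. The map $\Phi$ sends $\{w\}\times U$ exactly to the basic set for $s=\gamma(w)$, so $\Phi$ is open. Conversely, the basic set for $s=\gamma(w)e$ and $U$ equals the basic set for $\gamma(w)$ and $U$, because $U\subseteq\dom\alpha(s)\subseteq X_{w\inv}$. Its preimage is therefore $\{w\}\times U$, which is open, so $\Phi$ is continuous. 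This completes the identification.
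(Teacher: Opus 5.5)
Your proof is correct and follows essentially the same route as the paper. Both arguments use the map $(w,x)\mapsto[\gamma(w),x]$. The $F$-inverse property $s\le\gamma(\sigma(s))$ gives surjectivity and reduces basic neighborhoods to $D(\gamma(w),U)$. The projection $\sigma$ separates germs, which gives injectivity. You also spell out functoriality via $\gamma(g)\gamma(h)\le\gamma(gh)$, which the paper leaves as following directly.
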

\begin{proof}
    The key fact that we use is that $FIM_A$ is an $F$-inverse monoid.  More precisely, if $\sigma\colon FIM_A\to F_A$ is the canonical projection and if $w\in F_A$ is viewed as a reduced word over $A$, then $w$ evaluated in $FIM_A$ is the maximum element of $\sigma\inv(w)$.   Thus each germ of $FIM_A\ltimes X$ is uniquely represented by a germ $[w,x]$ with $w$ reduced since if $[u,x]=[v,x]$ then $\sigma(u)=\sigma(v)$. 
    Moreover, basic neighborhoods in $FIM_A\ltimes X$ can be taken to be of the form $D(w,U) =\{[w,u]\mid u\in U\}$  where $w$ is a reduced word and $U$ is an open subset of $X_{w\inv}$.  The actions of a reduced word viewed as an element of $F_A$ and as an element of $FIM_A$ are the same.  
    It follows directly that $(w,x)\mapsto [w,x]$ is an isomorphism of topological groupoids $F_A\ltimes X\to FIM_A\ltimes X$.
\end{proof}

\section{Groupoid algebras and homology}
\subsection{Groupoid algebras}
If $K$ is a commutative ring and $X$ is a locally compact Hausdorff and totally disconnected space, then $KX$ will denote the $K$-module of locally constant functions $f\colon X\to K$ with compact support.   
If $\mathcal G$ is a Hausdorff ample groupoid, then $K\mathcal G$ becomes a $K$-algebra~\cite{mygroupoidalgebra} with respect to the convolution product
\[f\ast g(\gamma) = \sum_{\ran(\alpha)=\ran(\gamma)}f(\alpha)g(\alpha\inv \gamma).\]
Often we write the convolution product as juxtaposition. 

In the case of $G\ltimes X$, one has that $K[G\ltimes X]$ is isomorphic to the partial skew group ring $KX\rtimes G$ where $KX$ is given the pointwise product; see~\cite{ExelDoku} for more on partial skew group rings.

To simplify the discussion, we assume from now on that $X$ is compact, and hence each $X_g$ is compact.   Let $\delta_g$ be the characteristic function of the compact open bisection $\{g\}\times X_{g\inv}$.   Then the clopen partition $G\ltimes X=\bigsqcup_{g\in G} \{g\}\times X_{g\inv}$ leads to a $K$-module direct sum decomposition \[K[G\ltimes X] = \bigoplus KX_g\delta_g.\]
That is, if $f$ is supported on $\{g\}\times X_{g\inv}$, we can write $f= (f\delta_{g\inv})\delta_g$ and $f\delta_{g\inv}$ is in $KX_g$.  In particular, in the direct sum decomposition $\delta_g = 1_{X_g}\delta_g$.

The reader can check that if $\alpha_g\colon KX_{g\inv}\to KX_g$ is defined by $\alpha_g(f) = f\circ \theta_{g\inv}$, then 
\[f\delta_g\ast f'\delta_g' = \alpha_g(\alpha_{g\inv}(f)f')\delta_{gg'} \] although we shall not use this formula much.

\begin{Prop}\label{p:bisection.prod}
Let $F_A$ have a semi-saturated partial action on $X$.
Let $u,v\in F_A$.  Then
\[(\{u\}\times X_{u\inv})(\{v\}\times X_{v\inv})\subseteq \{uv\}\times X_{(uv)\inv}\] with equality if $|uv|=|u|+|v|$.  Consequently, if $|uv|=|u|+|v|$, then $\delta_u\delta_v = \delta_{uv}$.    
\end{Prop}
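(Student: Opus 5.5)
The plan is to compute the product of bisections directly from the groupoid multiplication and then apply Proposition~\ref{p:prefix.contain}. Recall that in $G\ltimes X$ the composable pairs are $(g,y)(h,x)$ with $y=\theta_h(x)$, and then $(g,\theta_h(x))(h,x)=(gh,x)$. Hence
\[(\{u\}\times X_{u\inv})(\{v\}\times X_{v\inv}) = \{(uv,x)\mid x\in X_{v\inv},\ \theta_v(x)\in X_{u\inv}\}.\]
The containment is automatic, since each such $(uv,x)$ is an arrow of the groupoid, so $x\in X_{(uv)\inv}$. If one wants this explicitly: $\theta_u(\theta_v(x))$ is defined, hence equals $\theta_{uv}(x)$ by the dual prehomomorphism property, so $x$ lies in the domain $X_{(uv)\inv}$ of $\theta_{uv}$.

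For equality when $|uv|=|u|+|v|$, I will use semi-saturation, which gives $\theta_{uv}=\theta_u\circ\theta_v$ as partial maps, with domains matching. Take $x\in X_{(uv)\inv}$. Then $\theta_u\circ\theta_v$ is defined at $x$, so $x\in X_{v\inv}$ and $\theta_v(x)\in X_{u\inv}$, which puts $(uv,x)$ in the product. Alternatively, one can apply Proposition~\ref{p:prefix.contain}(2) to $v\inv u\inv$, using $|v\inv u\inv|=|v\inv|+|u\inv|$: it gives $X_{(uv)\inv}=\theta_{v\inv}(X_v\cap X_{u\inv})$, which is exactly the set of $x\in X_{v\inv}$ with $\theta_v(x)\in X_{u\inv}$.

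For the last assertion, I will use that for compact open bisections $U,V$ the convolution satisfies $1_U\ast 1_V=1_{UV}$. This holds because, for $\gamma\in UV$, the factorization $\gamma=\alpha\beta$ with $\alpha\in U$, $\beta\in V$ is unique: $\alpha$ is determined by $\ran(\gamma)$, since $U$ is a bisection. Combining this with the equality of bisections gives $\delta_u\delta_v=\delta_{uv}$.

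There is no real obstacle. The only point needing care is that semi-saturation is an equality of partial maps including their domains, which is what makes the reverse inclusion work.
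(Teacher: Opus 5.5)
Your proposal is correct and follows the paper's proof. The inclusion comes from the partial action axiom, and equality when $|uv|=|u|+|v|$ comes from semi-saturation $\theta_{uv}=\theta_u\circ\theta_v$, which factors $(uv,x)=(u,\theta_v(x))(v,x)$; your justification of $1_U\ast 1_V=1_{UV}$ for compact open bisections supplies the step the paper leaves implicit for the final assertion $\delta_u\delta_v=\delta_{uv}$.
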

\begin{proof}
    The inclusion is immediate from the definition of a partial action.  If $|u|+|v|=|uv|$ and $x\in X_{(uv)\inv}$, then $\theta_{uv}(x) = \theta_u(\theta_v(x))$ by the semi-saturated property, and so $(uv,x) = (u,\theta_v(x))(v,x)\in (\{u\}\times X_{u\inv})(\{v\}\times X_{v\inv})$.
\end{proof}

\subsection{Homology and cohomology of ample groupoids}
Let $\mathcal G$ be a Hausdorff ample groupoid.  Then $K\mathcal G^0$ is a left $K\mathcal G$-module via
\[f\cdot g(x) = \sum_{\ran(\gamma)=x}f(\gamma)g(\sour(\gamma)).\] We call this the trivial left $K\mathcal G$-module. If $U$ is a compact open bisection, then $1_U\cdot g=1_U\ast g\ast 1_{U\inv}$.  The action restricts to $K\mathcal G_0$ as left miltiplication.   The trivial right module is defined dually.

The \emph{homology} of $\mathcal G$ with coefficients in a right $\mathbb Z\mathcal G$-module $M$ is \[H_n(\mathcal G,M) = \Tor_n^{\mathbb Z\mathcal G}(M,\mathbb Z\mathcal G^0)\] and the \emph{cohomology} of $\mathcal G$ in a left $\mathbb Z\mathcal G$-module $M$ is given by \[H^n(\mathcal G,M) = \Ext^n_{\mathbb Z\mathcal G}(\mathbb Z\mathcal G^0,M).\]

One puts $H_n(\mathcal G) = H_n(\mathcal G,\mathbb Z\mathcal G^0)$  and $H^n(\mathcal G) = H^n(\mathcal G,\mathbb Z\mathcal G^0)$.
A good reference for groupoid homology is~\cite{BDGW}.

The \emph{cohomological dimension} of $\mathcal G$ is the shortest length of a projective resolution of $\mathbb Z\mathcal G^0$, or, equivalently, the least $n$ such that $H^{n+1}(\mathcal G,M)=0$ for all $\mathbb Z\mathcal G$-modules $M$.

The famous Stallings-Swan theorem says that free groups are precisely the groups of cohomological dimension $1$.   We show that semi-saturated actions of free groups on compact Hausdorff totally disconnected spaces lead to groupoids of cohomological dimension one.

If $f\colon X\to Y$ is a local homeomorphism of locally compact totally disconnected Hausdorff spaces, we have an induced homomorphism $f_\ast\colon \mathbb ZX\to \mathbb ZY$ given by $f_*(g)(y) = \sum_{f(x)=y}g(x)$.  

It is well known that $\ran_\ast\colon \mathbb Z\mathcal G\to \mathbb Z\mathcal G^0$ is a $\mathbb Z\mathcal G$-module homomorphism.  If $U$ is a compact open bisection, then $\ran_\ast(1_U) = 1_{\ran(U)}$.

\section{Cohomological dimension and semi-saturated partial actions}
For this section we fix a semi-saturated partial action $\theta$ of a free group $F_A$ on a compact totally disconnected Hausdorff space $X$.  We put $\mathcal G=F_A\ltimes X$.  Our goal is to give a length one projective resolution of $\mathbb Z \mathcal G^0=\mathbb ZX$.   

We begin our projective resolution with $\ran_\ast\colon \mathbb Z\mathcal G\to \mathbb ZX$.  It will be convenient to understand this map with respect to our direct sum decomposition $\mathbb Z\mathcal G = \bigoplus_{w\in F_A}\mathbb ZX_w\delta_w$.

\begin{Prop}\label{p:r*.nice}
    Suppose that $f\in \mathbb ZX_w$.  Then $\ran_\ast(f\delta_w)=f$.
\end{Prop}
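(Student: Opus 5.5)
The plan is to compute $f\delta_w$ as a function on $\mathcal G$ and then apply the formula $\ran_\ast(g)(x)=\sum_{\ran(\gamma)=x}g(\gamma)$.

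First, I would describe $f\delta_w$ explicitly. Since $\delta_w$ is the characteristic function of the bisection $\{w\}\times X_{w\inv}$, the convolution formula gives
\[
f\delta_w(\gamma)=\sum_{\ran(\alpha)=\ran(\gamma)} f(\alpha)\,\delta_w(\alpha\inv\gamma).
\]
Here $f\in \mathbb ZX_w\subseteq \mathbb ZX$ is viewed as a function supported on the unit space. So only $\alpha=\ran(\gamma)$ contributes, and
\[
f\delta_w(\gamma)=f(\ran(\gamma))\,\delta_w(\gamma).
\]
Thus $f\delta_w$ is supported on $\{w\}\times X_{w\inv}$, and $f\delta_w(w,x)=f(\theta_w(x))$ for $x\in X_{w\inv}$.

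Next, I would apply $\ran_\ast$. Fix $y\in X$. The arrows $\gamma$ with $\ran(\gamma)=y$ lying in the support $\{w\}\times X_{w\inv}$ are the $(w,x)$ with $\theta_w(x)=y$. Since $\theta_w\colon X_{w\inv}\to X_w$ is a bijection, there is exactly one such arrow when $y\in X_w$, namely $(w,\theta_{w\inv}(y))$, and none otherwise. Hence:
\begin{itemize}
\item if $y\in X_w$, then $\ran_\ast(f\delta_w)(y)=f(\theta_w(\theta_{w\inv}(y)))=f(y)$;
\item if $y\notin X_w$, then $\ran_\ast(f\delta_w)(y)=0=f(y)$, because $f$ is supported in $X_w$.
\end{itemize}
Therefore $\ran_\ast(f\delta_w)=f$.

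Alternatively, one can argue through bisections: $\ran_\ast$ is a module map, so $\ran_\ast(f\delta_w)=f\cdot\ran_\ast(\delta_w)=f\cdot 1_{X_w}=f$. This uses $\ran_\ast(1_U)=1_{\ran(U)}$ and the fact that the action of $\mathbb ZX$ on the trivial module is pointwise multiplication.

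No step is a real obstacle. The only care needed is the bookkeeping identification of $\mathbb ZX$ with functions supported on the units inside $\mathbb Z\mathcal G$, and the use of injectivity of $\theta_w$ on $X_{w\inv}$.
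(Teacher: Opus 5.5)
Your proof is correct. The argument you label as the alternative is exactly the paper's proof. Since $\ran_\ast$ is a $\mathbb Z\mathcal G$-module homomorphism and $\ran_\ast(1_U)=1_{\ran(U)}$, one has $\ran_\ast(f\delta_w)=f\ran_\ast(\delta_w)=f1_{X_w}=f$.

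Your main argument takes a different, more elementary route and unwinds the definitions directly. From the convolution formula you get $f\delta_w(\gamma)=f(\ran(\gamma))\delta_w(\gamma)$. You then evaluate the pushforward sum using the bijectivity of $\theta_w\colon X_{w\inv}\to X_w$.

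The direct computation is self-contained. It does not rely on the quoted facts that $\ran_\ast$ is $\mathbb Z\mathcal G$-linear and that $\ran_\ast(1_U)=1_{\ran(U)}$; in effect it reproves the special case of them that you need. It costs a few extra lines, whereas the paper's version is a one-liner given those standard facts.

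Your computation also shows that $\ran_\ast(f\delta_w)=f1_{X_w}$ for any $f\in\mathbb ZX$. The hypothesis $f\in\mathbb ZX_w$ is used only to drop the factor $1_{X_w}$.
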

\begin{proof}
Indeed,    $\ran_\ast(f\delta_w) = f\ran_\ast(\delta_w) = f1_{X_w}= f$ using that $\ran_\ast$ is a module homomorphism and $\delta_w = 1_{\{w\}\times X_{w\inv}}$.  
\end{proof}

Set $P_0=\mathbb Z\mathcal G$.  
Our next projective module will be $P_1=\bigoplus_{a\in A}\mathbb Z\mathcal G1_{X_a}\gamma_a$ where $\gamma_a$ is just a symbol to indicate to which summand an element belongs.

\begin{Prop}\label{p:belongs}
 One has $\mathbb ZX_w\delta_w1_{X_a} = \mathbb Z[\theta_w(X_{w\inv} \cap X_a)]\delta_w$.  Hence $\mathbb Z\mathcal G1_{X_a}=\bigoplus_{w\in F_X}\mathbb Z [\theta_w(X_w\inv \cap X_a)]\delta_w$.
\end{Prop}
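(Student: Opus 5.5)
The plan is to compute the product $f\delta_w 1_{X_a}$ directly from the definition of convolution, viewing $1_{X_a}$ as the characteristic function of the compact open bisection $\{1\}\times X_a$ of units. The product of $\{w\}\times X_{w\inv}$ and $\{1\}\times X_a$ as bisections is $\{w\}\times (X_{w\inv}\cap X_a)$, since composable pairs are $(w,x)(1,x)$ with $x\in X_a\cap X_{w\inv}$. Hence
\[\delta_w1_{X_a} = 1_{\{w\}\times(X_{w\inv}\cap X_a)}.\]
This is a function supported on $\{w\}\times X_{w\inv}$. By the decomposition $\mathbb Z\mathcal G=\bigoplus \mathbb ZX_g\delta_g$ we can rewrite it as $h\delta_w$, with $h=(\delta_w1_{X_a})\delta_{w\inv}$. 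The function $h$ is the characteristic function of the range of that bisection, namely $1_{\theta_w(X_{w\inv}\cap X_a)}$. Thus
\[\delta_w1_{X_a}=1_{\theta_w(X_{w\inv}\cap X_a)}\delta_w,\]
which is the formula $1_U\cdot g = 1_U g 1_{U\inv}$ in disguise.

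Next, for $f\in\mathbb ZX_w$ we have $f\delta_w1_{X_a} = f1_{\theta_w(X_{w\inv}\cap X_a)}\delta_w$. The function $f1_{\theta_w(X_{w\inv}\cap X_a)}$ ranges over all of $\mathbb Z[\theta_w(X_{w\inv}\cap X_a)]$ as $f$ ranges over $\mathbb ZX_w$: any $g$ supported there is already in $\mathbb ZX_w$, and it satisfies $g = g1_{\theta_w(\cdots)}$. This gives the first equality. The set $\theta_w(X_{w\inv}\cap X_a)$ is clopen in $X_w$ because $\theta_w$ is a homeomorphism between clopen sets, so all the modules involved make sense.

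For the second statement, use $\mathbb Z\mathcal G1_{X_a}=\left(\bigoplus_w \mathbb ZX_w\delta_w\right)1_{X_a}=\sum_w \mathbb ZX_w\delta_w1_{X_a}$, which follows from distributivity since right multiplication by $1_{X_a}$ is additive. By the first part, the $w$-th term lies in the $w$-th summand $\mathbb ZX_w\delta_w$, so the sum remains direct. No real obstacle is expected; the only care needed is the bookkeeping identifying the coefficient of $\delta_w$ as a function on the range side.
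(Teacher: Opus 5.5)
Your proposal is correct and follows essentially the same route as the paper: both rest on the identity $\delta_w1_{X_a}=1_{\theta_w(X_{w\inv}\cap X_a)}\delta_w$, obtained by computing products of compact open bisections. The only difference is cosmetic. You identify the coefficient of $\delta_w$ by right-multiplying by $\delta_{w\inv}$, whereas the paper checks directly that $(\{1\}\times\theta_w(X_{w\inv}\cap X_a))(\{w\}\times X_{w\inv})$ is the same bisection. You also spell out the surjectivity and directness steps that the paper leaves implicit.
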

\begin{proof}
Note that $(\{w\}\times X_{w\inv})(\{1\}\times X_a)$ consists of all $(w,x)$ with $x\in X_{w\inv}\cap X_a$, as does $(\{1\}\times \theta_w(X_{w\inv}\cap X_a))(\{w\}\times X_{w\inv})$.  Thus $\delta_w 1_{X_a} = 1_{\theta_w(X_{w\inv}\cap X_a)}\delta_w$, from which the proposition follows.
\end{proof}

We think of $f\delta_w\gamma_a$ as the edge $w\xrightarrow{a}wa$ in the Cayley graph of $F_A$ with weight $f$.
With this intuition, we define $\partial\colon P_1\to P_0$ by 
\[\partial (f\delta_w\gamma_a) = f\delta_{wa}-f\delta_w.\]
Note that Proposition~\ref{p:belongs} and Proposition~\ref{p:prefix.contain}(2) imply that $f\in \mathbb ZX_{wa}$.  Therefore, $\ran_\ast\partial(f\delta_w\gamma_a) = f-f=0$ by Proposition~\ref{p:r*.nice}. 

\begin{Prop}\label{p:partial.nice}
For $f\in \mathbb Z\mathcal G1_{X_a}$, one has $\partial(f\gamma_a)=f\delta_a-f$. Consequently, $\partial$ is a $\mathbb Z\mathcal G$-module homomorphism.  
\end{Prop}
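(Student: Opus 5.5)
By additivity of $\partial$ and of $f\mapsto f\delta_a-f$, I will reduce to homogeneous elements. Proposition~\ref{p:belongs} decomposes $\mathbb Z\mathcal G1_{X_a}$ as $\bigoplus_w \mathbb Z[\theta_w(X_{w\inv}\cap X_a)]\delta_w$, so it suffices to treat $f = g\delta_w$ with $g$ supported in $\theta_w(X_{w\inv}\cap X_a)$. By definition $\partial(g\delta_w\gamma_a)=g\delta_{wa}-g\delta_w$, so the formula $\partial(f\gamma_a)=f\delta_a-f$ reduces to the identity
\[
g\delta_w\delta_a = g\delta_{wa}.
\]

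There are two cases, according to whether $wa$ is reduced.

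In the first case $|wa|=|w|+1$. Then Proposition~\ref{p:bisection.prod} gives $\delta_w\delta_a=\delta_{wa}$ directly.

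In the second case $w$ ends in $a\inv$. Write $w=ua\inv$ with $|w|=|u|+1$, so $wa=u$. Here I will compute the product at the level of bisections. The support of $g\delta_w$ is contained in $\{(w,x)\mid x\in X_{w\inv}\cap X_a\}$, using $\delta_w1_{X_a}=1_{\theta_w(X_{w\inv}\cap X_a)}\delta_w$ from the proof of Proposition~\ref{p:belongs}. Because $x\in X_a$, we can write $x=\theta_a(y)$, and then
\[
(w,x)(a,y)=(wa,y)=(u,y).
\]
This gives a bijection between the support of $g\delta_w\delta_a$ and the points $(u,y)$ with $\theta_a(y)\in X_{w\inv}$. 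It remains to check that $g\delta_{u}$ is supported on exactly these points with the same values. Two facts are needed.

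First, $\theta_u(y)=\theta_w(\theta_a(y))$. This holds by the partial action axiom, since the composite is defined.

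Second, every $y\in X_{u\inv}$ with $\theta_u(y)\in\operatorname{supp} g$ satisfies $\theta_a(y)\in X_{w\inv}\cap X_a$. Since $w=ua\inv$ is reduced, semi-saturation gives $\theta_w=\theta_u\theta_{a\inv}$, so $X_w=\theta_u(X_{u\inv}\cap X_a)$ by Proposition~\ref{p:prefix.contain}(2). Hence $\operatorname{supp}g\subseteq X_w$ forces $y\in X_a$, and $\theta_{a\inv}(y)$ lands in $X_{w\inv}$.

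This support bookkeeping is the only delicate step. Alternatively, one can use $\delta_{ua\inv}=\delta_u\delta_{a\inv}$ together with $\delta_{a\inv}\delta_a=1_{X_{a\inv}}$ and $\delta_u1_{X_{a\inv}}=1_{\theta_u(X_{u\inv}\cap X_{a\inv})}\delta_u$, which gives the identity up to an idempotent that $g$ absorbs.

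Once the formula $\partial(f\gamma_a)=f\delta_a-f$ holds, the module homomorphism property is immediate. For $h\in\mathbb Z\mathcal G$, the element $hf$ still lies in $\mathbb Z\mathcal G1_{X_a}$, and right multiplication commutes with left multiplication, so
\[
\partial(hf\gamma_a)=hf\delta_a-hf=h\,\partial(f\gamma_a).
\]
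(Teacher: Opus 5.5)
Your strategy is the paper's. You reduce via Proposition~\ref{p:belongs} to $f=g\delta_w$ with $g$ supported in $\theta_w(X_{w\inv}\cap X_a)$ and prove $g\delta_w\delta_a=g\delta_{wa}$. Linearity then follows because left and right multiplication commute.

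The only structural difference is that the paper needs no case split. It uses the single identity $\delta_w\delta_a=1_{\theta_w(X_{w\inv}\cap X_a)}\delta_{wa}$, valid for every $w$. This holds because $(\{w\}\times X_{w\inv})(\{a\}\times X_{a\inv})$ is an open subset of the bisection $\{wa\}\times X_{(wa)\inv}$ with range $\theta_w(X_{w\inv}\cap X_a)$, and $g$ absorbs that idempotent. So this step does not actually use semi-saturation, whereas your two cases each invoke it.

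There is one slip in your first argument for the cancelling case $w=ua\inv$. Since $\theta_{a\inv}$ maps $X_a$ onto $X_{a\inv}$, semi-saturation gives $X_w=\theta_u(X_{u\inv}\cap X_{a\inv})$, not $\theta_u(X_{u\inv}\cap X_a)$. As written the claim is false: for $u=1$ it would assert $X_{a\inv}=X_a$.

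Correspondingly, the conclusion should be $y\in X_{a\inv}$ and $\theta_a(y)\in X_{w\inv}$. That is exactly what is needed for $(a,y)$ to be an arrow and for $(w,\theta_a(y))(a,y)=(u,y)$. Your claim "$y\in X_a$, $\theta_{a\inv}(y)\in X_{w\inv}$" does not give this.

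With $a$ and $a\inv$ swapped back, the bisection bookkeeping is fine. Your alternative computation already has the indices right and settles this case cleanly on its own:
$\delta_w\delta_a=\delta_u\delta_{a\inv}\delta_a=\delta_u1_{X_{a\inv}}=1_{X_w}\delta_u$, together with $g1_{X_w}=g$.
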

\begin{proof}
One computes $\delta_w\delta_a = 1_{\theta_w(X_{w\inv}\cap X_a)}\delta_{wa}$.  So, if 
 $h\delta_w\gamma_a\in \mathbb Z\mathcal G1_{X_a}$, then $h\delta_{wa} = h1_{\theta_w(X_{w\inv}\cap X_a)}\delta_{wa}=h\delta_w\delta_a$ as a consequence of Proposition~\ref{p:belongs}, and so $\partial(h\delta_w\gamma_a) = h\delta_{wa}-h\delta_w=h\delta_w\delta_a-h\delta_w$, as required.
\end{proof}

We now show that the chain complex $0\to P_2\xrightarrow{\partial}P_1\xrightarrow{\ran_\ast}\mathbb ZX\to 0$ is acyclic by constructing a contracting homotopy over $\mathbb Z$, based on the corresponding contracting homotopy for the special case that $X$ is a singleton, i.e., for the projective resolution of the trivial module for the free group
\[0\to \bigoplus_{a\in A} \mathbb ZF_A\gamma_a\to \mathbb ZF_A\to \mathbb Z\to 0.\] In that contracting homotopy, $s_1\colon \mathbb ZF_A\to \bigoplus_{a\in A}\mathbb ZF_A\gamma_a$ sends a word $w$  to the signed sum of the edges in the Cayley graph of $F_A$ on the geodesic path from $1$ to $w$ where we view $g\gamma_a$ as the edge $g\xrightarrow{a}ga$.   We shall emulate this contracting homotopy in our context.  The semi-saturated condition will ensure that if $f\in X_w$, then $f\in X_u$ for any prefix $u$ of $w$ by Proposition~\ref{p:prefix.contain}, which will allow us to perform this homotopy.

First define $s_0\colon \mathbb ZX\to \mathbb Z\mathcal G=P_0$ by $s_0(f) = f\delta_1$.   The definition of $s_1\colon P_0\to P_1$ is more involved, as we must encode signed traversals of edges.

Let $a\in A$ and $w\in F_A$.
If $f\in X_{wa}$ and $|wa|=|w|+1$, let us put \[f[w,a] = f\delta_{w}\gamma_a.\]  If $f\in X_{wa\inv}$ with $|wa\inv| = |w|+1$, then we put \[f[w,a\inv] = -f\delta_{wa\inv}\gamma_a.\]
(Imagine traversing the edge $wa\inv\xrightarrow{a}w$ in the negative direction.)

\begin{Lemma}\label{l:homotopy.stuff}
Suppose that $|wa^{\epsilon}| =|w|+1$ for $a\in A$, $\epsilon=\pm 1$ and $f\in X_{wa^\epsilon}$.
\begin{enumerate}
    \item $f[w,a^\epsilon]\in \mathbb Z\mathcal G1_{X_a}\gamma_a$.
    \item $\partial (f[w,a^\epsilon]) = f\delta_{wa^\epsilon} - f\delta_w$.
\end{enumerate}
\end{Lemma}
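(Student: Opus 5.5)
The plan is to split into the two cases $\epsilon=1$ and $\epsilon=-1$. In each case, (1) is deduced from Proposition~\ref{p:belongs} together with Proposition~\ref{p:prefix.contain}(2), and (2) from the definition of $\partial$ on elements $h\delta_u\gamma_a$. Throughout, $f\in X_{wa^\epsilon}$ is read as $f\in\mathbb ZX_{wa^\epsilon}$.

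\emph{Case $\epsilon=1$.} Here $f[w,a]=f\delta_w\gamma_a$. By Proposition~\ref{p:belongs}, it suffices to show that $f$ is supported in $\theta_w(X_{w\inv}\cap X_a)$. Since $|wa|=|w|+1$, Proposition~\ref{p:prefix.contain}(2) gives $\theta_w(X_{w\inv}\cap X_a)=X_{wa}$, and this contains the support of $f$. Hence $f\delta_w\in\mathbb Z\mathcal G1_{X_a}$, which proves (1). For (2), the definition of $\partial$ gives $\partial(f\delta_w\gamma_a)=f\delta_{wa}-f\delta_w$ directly; this is legitimate since (1) places $f\delta_w\gamma_a$ in $P_1$.

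\emph{Case $\epsilon=-1$.} Here $f[w,a\inv]=-f\delta_{wa\inv}\gamma_a$, with $f$ supported in $X_{wa\inv}$ and $|wa\inv|=|w|+1$. For (1), Proposition~\ref{p:belongs} reduces us to showing $X_{wa\inv}\subseteq\theta_{wa\inv}(X_{aw\inv}\cap X_a)$. Since $X_{wa\inv}$ is the range of $\theta_{wa\inv}$, whose domain is $X_{aw\inv}$, this amounts to showing $X_{aw\inv}\subseteq X_a$. The word $aw\inv$ is reduced with $|aw\inv|=1+|w\inv|$, because $w$ does not end in $a$; otherwise $wa\inv$ would cancel. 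Proposition~\ref{p:prefix.contain}(1) then gives $X_{aw\inv}\subseteq X_a$, so (1) holds. For (2), the definition of $\partial$ gives
\[\partial(-f\delta_{wa\inv}\gamma_a)=-(f\delta_{wa\inv a}-f\delta_{wa\inv})=f\delta_{wa\inv}-f\delta_w,\]
using $wa\inv a=w$ in $F_A$.

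The main obstacle is the bookkeeping in case $\epsilon=-1$. One has to correctly identify $\theta_{wa\inv}(X_{(wa\inv)\inv}\cap X_a)$, and then check that the definition of $\partial$, stated as $f\delta_u\gamma_a\mapsto f\delta_{ua}-f\delta_u$, applies with the group product $ua$ even though $ua=w$ involves cancellation. Two points settle this. First, the definition of $\partial$ is phrased in terms of the group element $ua$, so cancellation causes no trouble. Second, by the remark after the definition of $\partial$, $f$ lies in $\mathbb ZX_{ua}=\mathbb ZX_w$, so $f\delta_w$ is a valid element of the summand $\mathbb ZX_w\delta_w$. Alternatively, Proposition~\ref{p:partial.nice} gives $\partial(g\gamma_a)=g\delta_a-g$ with $g=-f\delta_{wa\inv}$, and then $f\delta_{wa\inv}\delta_a=f1_{\theta_{wa\inv}(X_{aw\inv}\cap X_a)}\delta_w=f\delta_w$ by the computation in the proof of Proposition~\ref{p:partial.nice}.
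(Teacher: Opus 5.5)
Your proof is correct and follows the paper's argument: the $\epsilon=1$ case and both computations of $\partial$ are exactly as in the paper. The only variation is in (1) for $\epsilon=-1$. There the paper writes $f\delta_{wa\inv}=f\delta_w\delta_{a\inv}=f\delta_w\delta_{a\inv}1_{X_a}$ using Proposition~\ref{p:bisection.prod}, whereas you check the equivalent inclusion $X_{aw\inv}\subseteq X_a$ via Propositions~\ref{p:belongs} and~\ref{p:prefix.contain}(1); both rest on the same no-cancellation property of semi-saturated actions.
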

\begin{proof}
We first prove the two items when $\epsilon=1$.  Proposition~\ref{p:prefix.contain}(2) implies that $f\in \mathbb Z[\theta_w(X_{w\inv}\cap X_a)]$.  Moreover, $\partial (f[w,a]) = \partial (f\delta_w\gamma_a) = f\delta_{wa}-f\delta_w$.

Suppose now that $\epsilon =-1$.   Then $f\delta_{wa\inv} =f\delta_w\delta_{a\inv} = f\delta_w\delta_{a\inv}1_{X_a}\in \mathbb Z\mathcal G1_{X_a}$ where the first equality is from Proposition~\ref{p:bisection.prod}.  Furthermore, \[\partial (f[w,a\inv]) =- \partial (f\delta_{wa\inv}\gamma_a) = -(f\delta_{w}-f\delta_{wa\inv}) = f\delta_{wa\inv} - f\delta_w,\] as required.
\end{proof}

We now define $s_1\colon P_0\to P_1$ as follows.  Suppose that $w=a_1^{\epsilon_1}\cdots a_n^{\epsilon_n}$ in reduced form with $a_i\in A$, $\epsilon_i\in \{\pm 1\}$ and $n\geq 0$. Set, for $f\in \mathbb ZX_w$,
\[s_1(f\delta_w) = \sum_{i=1}^n f[a_1^{\epsilon_1}\cdots a_{i-1}^{\epsilon_{i-1}},a_i^{\epsilon_i}],\]
where we note that by Proposition~\ref{p:prefix.contain}(1), we have that $f\in \mathbb ZX_{a_1^{\epsilon_1}\cdots a_i^{\epsilon_i}}$ for all $0\leq i\leq n$ since $f\in \mathbb ZX_w$.  Observe that if $w=1$, then $s_1(f\delta_1)=0$ since the sum is empty.

\begin{Prop}\label{p:contracting}
    The maps $s_0,s_1$ form a contracting homotopy of the chain complex of abelian groups $0\to P_1\to P_0\to \mathbb ZX\to 0$.
\end{Prop}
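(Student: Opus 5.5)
We must verify the three contracting-homotopy identities:
\[
\ran_\ast s_0=1_{\mathbb ZX},\qquad
s_0\ran_\ast+\partial s_1=1_{P_0},\qquad
s_1\partial=1_{P_1}.
\]
Since $P_0=\bigoplus_{w}\mathbb ZX_w\delta_w$ and, by Proposition~\ref{p:belongs}, $P_1=\bigoplus_{a,w}\mathbb Z[\theta_w(X_{w\inv}\cap X_a)]\delta_w\gamma_a$, and all maps are additive, it suffices to check each identity on the homogeneous elements $f\delta_w$ and $f\delta_w\gamma_a$.

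\textbf{The first two identities.} The first is Proposition~\ref{p:r*.nice} with $w=1$. For the second, take $f\in\mathbb ZX_w$ with $w=a_1^{\epsilon_1}\cdots a_n^{\epsilon_n}$ reduced, and write $w_i$ for the prefix of length $i$. By Proposition~\ref{p:prefix.contain}(1), $f\in\mathbb ZX_{w_i}$ for every $i$, so every term of $s_1(f\delta_w)$ is defined. Lemma~\ref{l:homotopy.stuff}(2) gives
\[
\partial s_1(f\delta_w)=\sum_{i=1}^n\bigl(f\delta_{w_i}-f\delta_{w_{i-1}}\bigr)=f\delta_w-f\delta_1,
\]
which telescopes. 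Proposition~\ref{p:r*.nice} gives $s_0\ran_\ast(f\delta_w)=f\delta_1$, and adding the two yields $f\delta_w$.

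\textbf{The third identity.} Take $f\delta_w\gamma_a$ with $f\in\mathbb Z[\theta_w(X_{w\inv}\cap X_a)]$. By Proposition~\ref{p:prefix.contain}(2), $f\in\mathbb ZX_{wa}\cap\mathbb ZX_w$, so
\[
s_1\partial(f\delta_w\gamma_a)=s_1(f\delta_{wa})-s_1(f\delta_w).
\]
We split into two cases according to whether $wa$ is reduced as written.

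\emph{Case 1: $|wa|=|w|+1$.} The geodesic to $wa$ is the geodesic to $w$ followed by the edge labelled $a$. So $s_1(f\delta_{wa})=s_1(f\delta_w)+f[w,a]$, and the difference is $f[w,a]=f\delta_w\gamma_a$.

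\emph{Case 2: $w$ ends in $a\inv$.} Write $w=ua\inv$ with $|w|=|u|+1$, so that $wa=u$. Then $s_1(f\delta_w)=s_1(f\delta_u)+f[u,a\inv]$, using that $f\in\mathbb ZX_w$ and hence $f\in\mathbb ZX_u$. The difference is therefore
\[
-f[u,a\inv]=f\delta_{ua\inv}\gamma_a=f\delta_w\gamma_a.
\]

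\textbf{Main obstacle.} The only delicate point is the bookkeeping in the third identity. One must ensure that the various $f[\cdot,\cdot]$ are well defined, which requires the support conditions supplied by Proposition~\ref{p:prefix.contain}. One must also ensure that the sums defining $s_1$ on the two words agree term by term along the common prefix. This holds because the coefficient function $f$ is the same in every term, and the terms depend only on the prefix and the next letter. This term-by-term agreement is exactly where the semi-saturated hypothesis is used.
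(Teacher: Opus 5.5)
Your proof is correct and is essentially the paper's argument: the same three identities checked on homogeneous elements, the same telescoping via Lemma~\ref{l:homotopy.stuff}(2), and the same two-case split ($|wa|=|w|+1$ versus $w=ua\inv$) for $s_1\partial=1$. One minor correction to your closing remark: the agreement of the two sums along the common prefix follows immediately from the definition of $s_1$, whereas semi-saturation is what guarantees that each $f[w_{i-1},a_i^{\epsilon_i}]$ is defined and lies in $P_1$, via Proposition~\ref{p:prefix.contain} and Lemma~\ref{l:homotopy.stuff}(1).
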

\begin{proof}
Clearly, $s_0,s_1$ are homomorphisms of abelian groups. 
To show that they form a contracting homotopy of abelian groups, we need to verify: $\ran_\ast s_0=1$; $\partial s_1+s_0\ran_\ast=1$;
    and $s_1\partial = 1$.  We begin with the first property.
    
    If $f\in \mathbb ZX$, then $\ran_\ast s_0(f) = \ran_\ast(f\delta_1) = f$ by Proposition~\ref{p:r*.nice}.

Next we turn to the second property: $\partial s_1+s_0\ran_\ast=1$.
    We apply the left hand side to $f\delta_w$ with $f\in \mathbb ZX_w$.  Suppose that $w=a_1^{\epsilon_1}\cdots a_n^{\epsilon_n}$ in reduced form with $n\geq 0$.  Note that $s_0\ran_\ast (f\delta_w) = f\delta_1$ by Proposition~\ref{p:r*.nice}.   Thus,
    \begin{align*}
        (\partial s_1+s_0\ran_\ast)(f\delta_w) &= \sum_{i=1}^n \partial f[a_1^{\epsilon_1}\cdots a_{i-1}^{\epsilon_{i-1}},a_i^{\epsilon_i}]+f\delta_1\\
        &= \sum_{i=1}^n (f\delta_{a_1^{\epsilon_1}\cdots a_{i}^{\epsilon_{i}}}- f\delta_{a_1^{\epsilon_1}\cdots a_{i-1}^{\epsilon_{i-1}}})+f\delta_1\\
        &= f\delta_w 
    \end{align*}
since the sum is telescoping, where we have used Lemma~\ref{l:homotopy.stuff}(2)
for the second equality.

Finally, we prove the third property:
    the equality $s_1\partial =1$ holds.
    We compute $s_1\partial(f\delta_w\gamma_a)$.    Let $w=a_1^{\epsilon_1}\cdots a_n^{\epsilon_n}$ in reduced form with $n\geq 0$.   There are two cases. If $a_n^{\epsilon_1}\neq a\inv$, then $|wa|=|w|+1$ and we compute
    \begin{align*}
        s_1\partial(f\delta_w\gamma_a)&=s_1(f\delta_{wa}-f\delta_w) \\
        &=  \sum_{i=1}^n f[a_1^{\epsilon_1}\cdots a_{i-1}^{\epsilon_{i-1}},a_i^{\epsilon_i}]+f[w,a]- \sum_{i=1}^n f[a_1^{\epsilon_1}\cdots a_{i-1}^{\epsilon_{i-1}},a_i^{\epsilon_i}]\\
        &= f\delta_w\gamma_a
    \end{align*}
    If $a_n^{\epsilon_n}=a\inv$, put $u=a_1^{\epsilon_1}\cdots a_{n-1}^{\epsilon_{n-1}}$, so that $w=ua\inv$.
    Then we have 
    \begin{align*}
        s_1\partial(f\delta_w\gamma_a)&=s_1(f\delta_{u}-f\delta_{ua\inv}) \\
        &=  \sum_{i=1}^{n-1} f[a_1^{\epsilon_1}\cdots a_{i-1}^{\epsilon_{i-1}},a_i^{\epsilon_i}]- \sum_{i=1}^{n-1} f[a_1^{\epsilon_1}\cdots a_{i-1}^{\epsilon_{i-1}},a_i^{\epsilon_i}]-f[u,a\inv]\\
        &= f\delta_{ua\inv}\gamma_a=f\delta_w\gamma_a,
    \end{align*}
    as required.
\end{proof}

\begin{proof}[Proof of Theorem~A]
The exactness of the resolution follows from the existence of a contracting homotopy over $\mathbb Z$; see Proposition~\ref{p:contracting}.  Since $X$ is compact, $\mathbb Z\mathcal G$ is unital, and hence projective as a $\mathbb Z\mathcal G$-module. Since $1_{X_a}$ is an idempotent, each $\mathbb Z\mathcal G1_{X_a}$ is projective.  This completes the proof.
\end{proof}


\begin{Cor}\label{c:dim.one}
Let $F_A$ have a semi-saturated partial action on a  compact totally disconnected Hausdorff space $X$. Then $\mathcal G=F_A\ltimes X$ has cohomological dimension at most $1$.  Hence for any $\mathbb Z\mathcal G$-module $M$, $H_n(\mathcal G,M)=0$ and $H^n(\mathcal G,M)=0$ for $n\geq 2$.  
\end{Cor}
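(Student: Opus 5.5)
The corollary follows directly from Theorem~A, which supplies a projective resolution of $\mathbb ZX=\mathbb Z\mathcal G^0$ of length one:
\[0\longrightarrow P_1=\bigoplus_{a\in A}\mathbb Z\mathcal G1_{X_a}\xrightarrow{\,\partial\,} P_0=\mathbb Z\mathcal G\xrightarrow{\ran_\ast}\mathbb ZX\longrightarrow 0.\]
By the definition of cohomological dimension as the shortest length of a projective resolution of $\mathbb Z\mathcal G^0$, this already shows that $\mathcal G$ has cohomological dimension at most $1$. The only point to confirm is that $P_0$ and $P_1$ are projective. This was noted in the proof of Theorem~A: $\mathbb Z\mathcal G$ is unital because $X$ is compact, each $\mathbb Z\mathcal G1_{X_a}$ is a direct summand cut out by an idempotent, and direct sums of projectives are projective.

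For the vanishing statements, I will compute $\Tor$ and $\Ext$ using this resolution. For a right module $M$, $H_n(\mathcal G,M)=\Tor_n^{\mathbb Z\mathcal G}(M,\mathbb ZX)$ is the homology of the complex $0\to M\otimes_{\mathbb Z\mathcal G}P_1\to M\otimes_{\mathbb Z\mathcal G}P_0\to 0$. This complex is zero in degrees $n\geq 2$, so those homology groups vanish.

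Dually, for a left module $M$, $H^n(\mathcal G,M)=\Ext^n_{\mathbb Z\mathcal G}(\mathbb ZX,M)$ is the cohomology of $0\to \Hom(P_0,M)\to\Hom(P_1,M)\to 0$. This complex is also zero in degrees $n\geq 2$, so $H^n(\mathcal G,M)=0$ for $n\geq 2$.

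The only step needing care is that $\Tor$ and $\Ext$ over the ring $\mathbb Z\mathcal G$ may be computed from any projective resolution. This is standard homological algebra for unital rings, and $\mathbb Z\mathcal G$ is unital here since $X$ is compact. There is therefore no real obstacle; the corollary is an immediate consequence of Theorem~A.
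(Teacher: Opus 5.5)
Your proposal is correct and takes the same approach as the paper. The paper states this corollary without a separate proof, as an immediate consequence of the length-one projective resolution in Theorem~A, and computing $\Tor$ and $\Ext$ from that resolution is exactly the intended argument.
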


\begin{Cor}
Let $(X,T)$ be a Deaconu-Renault system with $X$ compact Hausdorff and totally disconnected and the domain of $T$ clopen.  Then $\mathcal G_{(X,T)}$ has cohomological dimension at most $1$.
\end{Cor}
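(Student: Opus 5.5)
The corollary about Deaconu--Renault systems follows by combining Theorem~\ref{t:castro} with Corollary~\ref{c:dim.one}. Let $(X,T)$ be a Deaconu--Renault system with $X$ compact Hausdorff and totally disconnected and $\dom(T)$ clopen. I will invoke the converse direction of Theorem~\ref{t:castro}. Since $\dom(T)$ is compact and $T$ is a local homeomorphism, we can partition $\dom(T)$ into finitely many clopen sets $U_1,\ldots,U_n$ on which $T$ restricts to a homeomorphism onto $V_i$. This produces a semi-saturated (and orthogonal) partial action $\theta$ of the free group $F_A$, $A=\{a_1,\ldots,a_n\}$, on $X$, together with an isomorphism of topological groupoids $F_A\ltimes X\cong \mathcal G_{(X,T)}$. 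One point needs checking: the $V_i=T(U_i)$ must be clopen, so that $\theta$ really is a partial action by partial homeomorphisms with clopen domains. They are open because $T$ is a local homeomorphism, and compact, hence closed, because each $U_i$ is compact and $X$ is Hausdorff.

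Next, apply Corollary~\ref{c:dim.one} to $\theta$. It gives a length one projective resolution of $\mathbb ZX$ over $\mathbb Z[F_A\ltimes X]$, namely that of Theorem~A, so $F_A\ltimes X$ has cohomological dimension at most $1$.

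Finally, cohomological dimension is an invariant of the topological groupoid up to isomorphism. An isomorphism $\mathcal G\cong\mathcal H$ of ample groupoids induces a ring isomorphism $\mathbb Z\mathcal G\cong \mathbb Z\mathcal H$ carrying $\mathbb Z\mathcal G^0$ to $\mathbb Z\mathcal H^0$ as trivial modules, since the induced homeomorphism of unit spaces intertwines the module actions. A projective resolution of one trivial module therefore transports to a projective resolution of the other of the same length. Hence $\mathcal G_{(X,T)}$ has cohomological dimension at most $1$.

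There is no substantial obstacle. The only care needed is confirming that the hypotheses of Theorem~\ref{t:castro}'s converse are met, in particular that the $V_i$ are clopen, and that the isomorphism preserves the trivial module; both checks are routine.
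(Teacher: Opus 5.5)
Your proposal is correct and is essentially the paper's argument: the corollary is stated immediately after Corollary~\ref{c:dim.one} as a direct consequence of it combined with Theorem~\ref{t:castro} and the groupoid isomorphism $F_A\ltimes X\cong\mathcal G_{(X,T)}$. Your extra checks, that the $V_i$ are clopen and that cohomological dimension is an isomorphism invariant, are routine details the paper leaves implicit.
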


We are now in a position to prove Theorem~B.

\begin{proof}[Proof of Theorem~B]
    Tensoring the deleted projective resolution from Theorem~A with the right module $\mathbb ZX$ yields the chain complex
    \[\bigoplus_{a\in A}\mathbb ZX_a\xrightarrow{\,d\,}\mathbb ZX\] computing homology where $d$ is identified with $1_{\mathbb ZX}\otimes \partial$ under the isomorphisms $\mathbb ZX\otimes_{\mathbb Z\mathcal G}\mathbb Z\mathcal{G}1_{X_A}\cong \mathbb ZX_a$ and $\mathbb ZX\otimes_{\mathbb Z\mathcal G}\mathbb Z\mathcal{G}\cong \mathbb ZX$.  If $f\in \mathbb ZX_a$, then $d(f)$ corresponds to $(1_{\mathbb ZX}\otimes \partial)(1_{X}\otimes f) = 1_{X}\otimes \partial f = 1_{X}\otimes f\delta_a-1_{X}\otimes f = \delta_{a\inv}f\delta_a\otimes 1_{X_{a\inv}} -f\otimes 1_{X_a}= f\circ \theta_{a}\otimes 1_{X_{a\inv}}-f\otimes 1_{X_a}$, which corresponds to $\theta_a^*(f)-\iota_a(f)$ under the isomorphisms.  The result now follows by definition of  groupoid homology as $\mathrm{Tor}_\bullet^{\mathbb Z\mathcal G}(\mathbb Z\mathcal{G}^0,\mathbb Z\mathcal{G}^0)$.
\end{proof}

Theorem~B reduces to a known result for Deaconu-Renault group\-oids \cite[Theorem~6.7]{fkps19}, although that reference assumes that the local homeomorphism is fully defined and surjective, whereas we allow it to be defined on a clopen subset and not be surjective.  However, we require $X$ to be compact, although that can be relaxed somewhat; see Remark~\ref{rmk:more.general}.  

\begin{Cor}\label{c:deaconu-renault}
    Let $(X,T)$ be a Deaconu-Renault groupoid with $X$ compact Hausdorff and totally disconnected and where $T$ has clopen domain $U$.  Then $H_n(\mathcal G_{(X,T)})=0$ for $n\geq 2$ and
    \begin{align*}
        H_0(\mathcal G_{(X,T)}) &= \mathop{\mathrm{coker}}(\iota-T_*)\\
        H_1(\mathcal G_{(X,T)}) &= \ker (\iota- T_*).
    \end{align*}
    where $\iota\colon \mathbb ZU\to \mathbb ZX$ is extension by $0$ and $T_\ast\colon \mathbb ZU\to\mathbb ZX$ is the induced homomorphism.
\end{Cor}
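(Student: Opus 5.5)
The plan is to deduce the corollary from Theorem~\ref{t:castro} together with Theorem~B. First I realize $\mathcal G_{(X,T)}$ as a partial action groupoid. Since $U=\dom(T)$ is clopen in the compact space $X$, it is compact. I will partition $U=U_1\sqcup\cdots\sqcup U_n$ into clopen sets such that each restriction $T|_{U_i}$ is a homeomorphism onto a clopen set $V_i$. As in the proof of Theorem~\ref{t:castro}, I then take the semi-saturated orthogonal partial action of $F_A$, with $A=\{a_1,\dots,a_n\}$, given by $X_{a_i}=U_i$, $X_{a_i\inv}=V_i$ and $\theta_{a_i}=(T|_{U_i})\inv$. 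This gives $\mathcal G_{(X,T)}\cong F_A\ltimes X$. Homology is invariant under isomorphism of ample groupoids, so Theorem~B immediately gives $H_n=0$ for $n\ge 2$. It also expresses $H_0$ and $H_1$ as the cokernel and kernel of
\[
d=\bigoplus_{i}(\iota_{a_i}-\theta_{a_i}^*)\colon \bigoplus_{i=1}^n \mathbb ZU_i\to \mathbb ZX .
\]

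Next I identify $d$ with $\iota-T_*$. Orthogonality gives an isomorphism $\Phi\colon\bigoplus_i\mathbb ZU_i\to\mathbb ZU$, namely $(f_i)\mapsto\sum_i f_i$ with each $f_i$ extended by zero on $U$; its inverse is $f\mapsto(f|_{U_i})_i$. Under $\Phi$, the sum $\sum_i\iota_{a_i}$ becomes the extension-by-zero map $\iota\colon\mathbb ZU\to\mathbb ZX$.

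It remains to check that $\theta_{a_i}^*(f_i)=(T|_{U_i})_*(f_i)$. For $y\in V_i$,
\[
\theta_{a_i}^*(f_i)(y)=f_i(\theta_{a_i}(y))=f_i\bigl((T|_{U_i})\inv(y)\bigr),
\]
and this is the unique term of $\sum_{T(x)=y,\,x\in U_i}f_i(x)$. Both expressions vanish off $V_i$. Summing over $i$ and using $T=\bigsqcup_i T|_{U_i}$ gives $T_*(f)=\sum_i\theta_{a_i}^*(f|_{U_i})$. Hence $d\circ\Phi\inv=\iota-T_*$, so $d$ and $\iota-T_*$ have the same kernel and cokernel, up to the isomorphism $\Phi$ on the source.

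There is no real obstacle here. The only points needing care are the existence of the finite clopen partition, which uses compactness of $U$ and the fact that $T$ is a local homeomorphism on a totally disconnected space, so that small clopen sets on which $T$ is injective have clopen images, and the bookkeeping of the sign convention. Theorem~B states $\iota_a-\theta_a^*$, while the proof of Theorem~B produces $\theta_a^*-\iota_a$; the two differ by a sign, which affects neither the kernel nor the cokernel.
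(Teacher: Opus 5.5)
Your proposal is correct and is essentially the paper's proof. You realize $\mathcal G_{(X,T)}$ as $F_A\ltimes X$ via the orthogonal semi-saturated action from the proof of Theorem~\ref{t:castro}, apply Theorem~B, and use orthogonality to identify $\bigoplus_{a\in A}\mathbb ZX_a$ with $\mathbb ZU$, under which $\sum_a\iota_a$ becomes $\iota$ and $\sum_a\theta_a^*$ becomes $T_*$. Your extra checks are all accurate: the clopenness of the $V_i$, the pointwise identity $\theta_{a_i}^*(f_i)=(T|_{U_i})_*(f_i)$, and the remark that the sign discrepancy between the statement and the proof of Theorem~B does not affect kernels or cokernels.
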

\begin{proof}
    The proof of Theorem~\ref{t:castro} shows that there is an orthogonal semi-saturated partial action $\theta$ of a free group $F_A$ on a finite set $A$ such that $U = \bigsqcup_{a\in A} X_a$, $T=\bigcup_{a\in A}\theta_{a\inv}$ and $\mathcal G_{(X,T)} \cong F_A\ltimes X$.  We can identify $\bigoplus_{a\in A}\mathbb ZX_a$ with $\mathbb ZU$ via the inclusions, and under this identification, using $T=\bigcup_{a\in A}\theta_{a\inv}$, the map $\bigoplus_{a\in A}(\iota_a-\theta_a^*)$ becomes $\iota-T_*$.  Indeed, $T_*(f)(x)=\sum_{T(y)=x}f(y) = \sum_{a\in A}f(\theta_a(x))=\sum_{a\in A}\theta_a^*(f)(x)$.  The result follows from Theorem~B.
\end{proof}

Notice that the isotropy groups of $F_A\ltimes X$ are subgroups of $F_A$, and hence free.  Thus they are torsion-free and satisfy the Baum-Connes conjecture.  Since $H_n(F_A\ltimes X)=0$ for $n\geq 2$, the Matui HK conjecture holds for $F_A\ltimes X$ and we have $K_i(C^*_r(F_A\ltimes X))\cong H_i(F_A\ltimes X)$ for $i=0,1$ by~\cite[Remark~3.5]{PY22}.  By Abadie's results~\cite{abadie}, $C^*_r(F_A\ltimes X)$ is the reduced partial crossed product $C(X)\rtimes_r F_A$.  

\begin{Rmk}\label{rmk:more.general}
     Most of the results of this paper can be adapted to the case where $X$ is merely assumed to be locally compact Hausdorff and totally disconnected, except that one will obtain a flat resolution instead of a projective resolution in Theorem~A (as $\mathbb Z\mathcal G$ and its direct summands are merely flat), and so one can only make conclusions about homology and not cohomology.   In this context, $\delta_g$ is just the characteristic function of a clopen bisection, but convolving it with a compactly supported locally constant function still gives a compactly supported  locally constant function, and so our products with it make sense when adapting the arguments.  The proof that Deaconu-Renault groupoids come from orthogonal semi-saturated partial actions of free groups in the locally compact Hausdorff and totally disconnected case requires $\dom T$ to be paracompact in order to partition it into disjoint compact open sets on which $T$ is injective.   Otherwise, there are no obstacles and, in particular, Theorem~B and Corollary~\ref{c:deaconu-renault} still go through under these hypotheses.
\end{Rmk}

\section{Global dimension}
In this section, we show that for semi-saturated partial group actions of a free group $F_A$, the algebra $K[F_A\ltimes X]$ has global dimension at most $2$ if $K$ is a field and $X$ is compact Hausdorff totally disconnected and second countable. I conjecture such algebras are, in fact, hereditary.  This would generalize the well-known fact that Leavitt path algebras of finite graphs are hereditary~\cite{LeavittBook} and free group algebras are hereditary.

Recall that a ring is (left) \emph{hereditary} if each left ideal is projective.  
It is well known that a ring is  hereditary if and only if any submodule of a projective module is projective. 

The (left) \emph{global dimension} of a ring $R$ is the supremum of the projective dimensions of its (left) modules.  A ring is hereditary if and only if it has global dimension at most one.   A result of Auslander~\cite[Theorem~8.16]{Rotmanhom} says that the global dimension of $R$ is the supremum of the projective dimensions of cyclic modules $R/L$ with $L$ a left ideal.  In particular, by considering the exact sequence $0\to L\to R\to R/L\to 0$, it follows that if each left ideal has projective dimension at most $n$, then $R$ has global dimension at most $n+1$.  

We develop some preliminaries.   First, we assume that $\mathcal G$ is a Hausdorff ample groupoid with compact unit space in what follows.   If $X$ is a locally compact and totally disconnected Hausdorff space and $K$ is a commutative ring, then $K\otimes_{\mathbb Z} \mathbb ZX\cong KX$ by~\cite[Corollary~2.3]{Li_2025}. 
We use $\pd_R M$ for the projective dimension of a module over a ring $R$.

\begin{Lemma}\label{l:pd.drops}
    Let $K$ be a commutative ring.  Then $\pd_{K\mathcal G}K\mathcal G^0\leq \pd_{\mathbb Z\mathcal G}\mathbb Z\mathcal G^0$.
\end{Lemma}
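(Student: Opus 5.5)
Take a projective resolution of $\mathbb Z\mathcal G^0$ over $\mathbb Z\mathcal G$ of length $n=\pd_{\mathbb Z\mathcal G}\mathbb Z\mathcal G^0$ and apply $K\otimes_{\mathbb Z}-$. Assume $n<\infty$, since otherwise there is nothing to prove. Write the resolution as
\[0\to P_n\to\cdots\to P_0\to \mathbb Z\mathcal G^0\to 0.\]
The argument has three steps.

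\textbf{Step 1: the tensored modules are projective.} Since $\mathcal G$ has compact unit space, $\mathbb Z\mathcal G$ is unital. We have $K\otimes_{\mathbb Z}\mathbb Z\mathcal G\cong K\mathcal G$ as $K$-algebras, by~\cite[Corollary~2.3]{Li_2025} applied to the space $\mathcal G$; this is a locally compact Hausdorff totally disconnected space because $\mathcal G$ is Hausdorff and ample, and the multiplication is compatible. Similarly $K\otimes_{\mathbb Z}\mathbb Z\mathcal G^0\cong K\mathcal G^0$, and one checks directly that this identification intertwines the trivial module structures. The functor $K\otimes_{\mathbb Z}-$ sends free $\mathbb Z\mathcal G$-modules to free $K\mathcal G$-modules, since $K\otimes_{\mathbb Z}\mathbb Z\mathcal G^{(I)}\cong K\mathcal G^{(I)}$. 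It commutes with direct summands, so it sends projective $\mathbb Z\mathcal G$-modules to projective $K\mathcal G$-modules. Thus $K\otimes_{\mathbb Z}P_\bullet$ is a complex of projective $K\mathcal G$-modules of length $n$ augmented to $K\mathcal G^0$.

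\textbf{Step 2: the tensored complex is exact.} This is the main point, because $K$ need not be flat over $\mathbb Z$. The augmented complex is an exact complex of modules, and I will show it is split exact as a complex of abelian groups. The key observation is that $\mathbb Z\mathcal G^0$ is a free abelian group. Indeed, $\mathbb ZX$ is free for any locally compact Hausdorff totally disconnected $X$, which is standard via Nöbeling's theorem or Bergman's result. Likewise, each $P_i$ is a direct summand of a free $\mathbb Z\mathcal G$-module, and $\mathbb Z\mathcal G$ is itself free abelian, being $\mathbb Z\mathcal G$ for the space $\mathcal G$; hence each $P_i$ is projective, and so free, as an abelian group.

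Now split off from the right. The augmentation $P_0\to\mathbb Z\mathcal G^0$ has free abelian target, so it splits over $\mathbb Z$, and its kernel is a summand of $P_0$, hence free. Inductively every syzygy is free abelian, and each short exact sequence splits over $\mathbb Z$. An additive functor preserves split exact sequences, so $K\otimes_{\mathbb Z}-$ preserves exactness of the whole resolution. Alternatively, one can phrase this using $\Tor^{\mathbb Z}_1(K,-)=0$ on free abelian groups.

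\textbf{Step 3: conclusion.} Steps 1 and 2 give a projective resolution of $K\mathcal G^0$ over $K\mathcal G$ of length $n$. Hence $\pd_{K\mathcal G}K\mathcal G^0\le n=\pd_{\mathbb Z\mathcal G}\mathbb Z\mathcal G^0$.

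The step I expect to need the most care is Step 2, specifically the freeness of $\mathbb ZX$ as an abelian group. If citing that result is undesirable, there is a weaker route for the cases used later in the paper. It suffices that $\mathbb ZX$ is torsion-free, hence flat over $\mathbb Z$. Every finitely generated subgroup of $\mathbb ZX$ lies in the span of characteristic functions of a finite clopen partition, so $\mathbb ZX$ is a directed union of free groups, and thus flat. The syzygies are submodules of flat, hence torsion-free, groups, so they are torsion-free and therefore flat. Then $\Tor_1^{\mathbb Z}(K,-)$ vanishes on all syzygies, which gives exactness after tensoring. I would use this flatness argument, as it is fully elementary.
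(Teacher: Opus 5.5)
Your argument is correct, and in the flatness form you say you would actually use it is essentially the paper's proof. The paper observes that the $P_i$ and $\mathbb Z\mathcal G^0$ are torsion-free, hence flat, abelian groups, so $H_n(K\otimes_{\mathbb Z}P_\bullet)\cong\Tor_n^{\mathbb Z}(K,\mathbb Z\mathcal G^0)=0$ for $n\geq 1$; this is the packaged form of your syzygy-by-syzygy vanishing of $\Tor_1^{\mathbb Z}(K,-)$. Your first route via Nöbeling--Bergman freeness and $\mathbb Z$-splitting is also valid, but it uses a much deeper theorem than the torsion-freeness that suffices over $\mathbb Z$.
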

\begin{proof}
    First note that since $K\mathcal G\cong K\otimes_{\mathbb Z}\mathbb Z\mathcal G$, extension of scalars preserves projectivity.  Let $P_\bullet\to \mathbb Z\mathcal G^0$ be a projective resolution of length $\pd_{\mathbb Z\mathcal G}\mathbb Z\mathcal G^0$.  Then $K\otimes_{\mathbb Z}P_\bullet\to K\mathcal G^0$ is a chain complex of projective $K\mathcal G$-modules.  Since $\mathbb Z\mathcal G$ is torsion-free as an abelian group,  $P_\bullet\to \mathbb Z\mathcal G^0$ is a flat resolution over $\mathbb Z$ of the torsion-free, hence flat, abelian group $\mathbb Z\mathcal G^0$.  Thus $H_n(K\otimes_{\mathbb Z}P_\bullet)\cong \Tor_n^{\mathbb Z}(K,\mathbb Z\mathcal G^0)=0$.   We conclude that $K\otimes_{\mathbb Z}P_\bullet\to K\mathcal G^0$ is a projective resolution of length $\pd_{\mathbb Z\mathcal G}\mathbb Z\mathcal G^0$, establishing the result. 
\end{proof}

Let $S$ be the inverse monoid of compact open bisections of $\mathcal G$.   Then $K\mathcal G\cong KS/I$ via $1_U\mapsfrom U$ where $I$ is generated by all differences $U+V - (U\cup V)$ where $U,V\subseteq \mathcal G^0$ are disjoint and compact open~\cite[Theorem~3.11]{mygroupoidarxiv}. Moreover, any idempotent of $S$, such as $ss^*$, maps into $K\mathcal{G}^0$

Let $M,N$ be left $K\mathcal G$-modules.  We wish to show that $M\otimes_{K\mathcal G^0} N$ is a $K\mathcal G$-module.  First note that $M\otimes_K N$ is a $KS$-module in the standard way: $s(m\otimes n) = sm\otimes sn$.  Note that $M\otimes_{K\mathcal G^0}N$ is the quotient of $M\otimes_K N$ by the $K$-module $A$ generated by the elements $em\otimes n -m\otimes en$ with $e=1_U$ where $U\subseteq \mathcal G^0$ compact open.  If $s=1_V$ with $V$ a compact open bisection, then $ses^* = 1_{VUV\inv}$.  Therefore, $s(em\otimes n -m\otimes en) = ses^*s\otimes sn-sm\otimes ses^*sn$ belongs to $A$.  It follows that $M\otimes_{K\mathcal G^0} N$ is a $KS$-module.  To show that it is a $K\mathcal G$-module we must show that if $m\otimes n\in M\otimes_{K\mathcal G^0} N$ is a basic tensor and $U,V\subseteq G^0$ are disjoint compact open sets, then $(1_U+1_V)(m\otimes n) = 1_{U\cup V}(m\otimes n)$.  Indeed, we have that
\begin{align*}
1_{U\cup V}(m\otimes n) &=1_{U\cup V}m\otimes 1_{U\cup V}n 
=m\otimes 1_{U\cup V}n
=m\otimes (1_U+1_V)n \\
&= m\otimes 1_Un+m\otimes 1_Vn= 1_Um\otimes 1_Un+1_Vm\otimes 1_Vn\\ &= 1_U(m\otimes n)+1_V(m\otimes n)= (1_U+1_V)(m\otimes n)
\end{align*}
 since the tensor product is over $K\mathcal G^0$.

It is trivial to verify that the natural isomorphism $(L\otimes_{K\mathcal G^0} M)\otimes_{K\mathcal G^0} N\cong L\otimes_{K\mathcal G^0} (M\otimes_{K\mathcal G^0} N)$ is a $K\mathcal G$-module isomorphism.

Notice that the action of $S$ on the trivial module $K\mathcal G^0$ is $(s,f)\mapsto sfs^*$.

\begin{Prop}\label{p:unit}
There is a natural isomorphism $KG^0\otimes_{K\mathcal G^0} M\to M$ for any $K\mathcal G$-module $M$.
\end{Prop}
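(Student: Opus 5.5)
The plan is to write down the obvious multiplication map and then check two things: that it is a bijection of $K$-modules, and that it intertwines the $K\mathcal G$-actions. Since $\mathcal G^0$ is compact, $K\mathcal G^0$ is a unital commutative $K$-algebra with unit $1_{\mathcal G^0}$, and $M$ is a left $K\mathcal G^0$-module by restriction. Let $\mu\colon K\mathcal G^0\otimes_{K\mathcal G^0} M\to M$ be given by $\mu(f\otimes m)=fm$.

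\emph{Well-definedness and bijectivity.} This is the standard unital isomorphism $R\otimes_R M\cong M$ for $R=K\mathcal G^0$. Here $K\mathcal G^0$ is regarded as a bimodule over itself by multiplication, which agrees with the trivial module structure restricted to $K\mathcal G^0$, as noted earlier. The inverse is $m\mapsto 1_{\mathcal G^0}\otimes m$. Indeed $f\otimes m=1_{\mathcal G^0}f\otimes m=1_{\mathcal G^0}\otimes fm$, and $\mu(1_{\mathcal G^0}\otimes m)=m$ because $M$ is unital; I am assuming modules are unital, which is automatic since $1_{\mathcal G^0}$ is the unit of $K\mathcal G$. Naturality in $M$ is clear.

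\emph{$K\mathcal G$-linearity.} This is the main step. The $K\mathcal G$-structure on the tensor product comes from the $KS$-action $s(f\otimes m)=sfs^*\otimes sm$, where $S$ is the inverse monoid of compact open bisections and $s=1_V$. Because $K\mathcal G\cong KS/I$ and the elements $1_V$ span $K\mathcal G$, it suffices to check $\mu(s(f\otimes m))=s\mu(f\otimes m)$ for such $s$. We have
\[\mu(s(f\otimes m))=sfs^*sm \quad\text{and}\quad s\mu(f\otimes m)=sfm.\]
So the key identity to prove is $sfs^*s=sf$ in $K\mathcal G$ for $f\in K\mathcal G^0$. I would reduce by linearity to $f=1_U$ with $U\subseteq\mathcal G^0$ compact open. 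In that case $s^*s=1_{V\inv V}$ and $1_U$ are commuting idempotents of $K\mathcal G^0$, so
\[s1_Us^*s=s\,s^*s\,1_U=s1_U,\]
using $ss^*s=s$. Note that this uses commutativity of $K\mathcal G^0$, which is the only non-formal input. With the identity in hand, $\mu$ is a $K\mathcal G$-module isomorphism.
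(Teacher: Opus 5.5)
Your proposal is correct and follows the paper's proof. You use the same multiplication map $f\otimes m\mapsto fm$, and you obtain $K\mathcal G$-linearity from the same identity $s1_Us^*s=ss^*s1_U=s1_U$, which rests on idempotents of $K\mathcal G^0$ commuting.
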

\begin{proof}
The action map induces a $K\mathcal G^0$-module isomorphism $f\otimes m\mapsto fm$.   We check this is a $K\mathcal G$-module isomorphism.  Indeed, if $s\in S$ and $e=1_U$ with $U\subseteq \mathcal G^0$ compact open, then $s(e\otimes m) = ses^*\otimes sm\mapsto ses^*sm=ss^*sem=sem$.
\end{proof}

We can similarly make $\Hom_{K\mathcal G^0}(M,N)$ into a $K\mathcal G$-module for $K\mathcal G$-modules $M,N$. The action is $(sf)(m) = sf(s^*m)$. To see that  this is $K\mathcal G^0$-linear, let $e=1_U$ with $U\subseteq \mathcal G^0$ compact open and let $s\in S$.  Then $(sf)(em) = sf(s^*em) = sf(s^*ess^*m) = ss^*esf(s^*m) = esf(s^*m)=e(sf)(m)$.  This makes $\Hom_{K\mathcal G^0}(M,N)$ into a $KS$-module.  To check that it is a $K\mathcal G$-module, we take disjoint compact open  $U,V\subseteq \mathcal G^0$ and $f\colon M\to N$, a $K\mathcal G^0$-module homomorphism, and check that $1_{U\cup V}f = 1_Uf+ 1_Vf$.  Indeed,
\begin{align*}
    (1_{U\cup V}f)(m) &= 1_{U\cup V}f(1_{U\cup V}m) = f(1_{U\cup V}m)=f((1_U+1_V)m) \\ &= f(1_Um)+f(1_Vm) = 1_Uf(1_Um)+1_Vf(1_Vm) \\ &= (1_Uf)(m)+(1_Vf)(m)  = ((1_U+1_V)f)(m).
\end{align*}

\begin{Prop}\label{p:counit}
If $M$ is a $K\mathcal G$-module, then $\Hom_{K\mathcal G^0}(K\mathcal G^0, M)\cong M$ naturally in $M$.
\end{Prop}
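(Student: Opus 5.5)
The natural candidate is evaluation at the unit: define $\Phi\colon \Hom_{K\mathcal G^0}(K\mathcal G^0,M)\to M$ by $\Phi(f)=f(1_{\mathcal G^0})$. Since $\mathcal G^0$ is compact, $1_{\mathcal G^0}$ is the identity of the commutative unital ring $K\mathcal G^0$. The claim then splits into two parts: $\Phi$ is a $K$-linear bijection, and $\Phi$ is $K\mathcal G$-linear for the module structure on $\Hom$ defined just before the statement.

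For bijectivity, I will use the standard fact that $\Hom_R(R,M)\cong M$ via $f\mapsto f(1)$ for a unital ring $R$. The inverse sends $m$ to the map $g\mapsto gm$. This map is $K\mathcal G^0$-linear because $K\mathcal G^0$ is commutative and acts on $M$ by restriction of the $K\mathcal G$-action. Naturality in $M$ is immediate: for a $K\mathcal G$-homomorphism $\phi\colon M\to N$ we have $\phi(f(1))=(\phi\circ f)(1)$.

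The key step is $K\mathcal G$-linearity. Since $K\mathcal G\cong KS/I$ is spanned by the images $1_V$ of compact open bisections $V\in S$, it suffices to check $\Phi(sf)=s\Phi(f)$ for $s=1_V$. Write $e=1_{\mathcal G^0}$. The action on $K\mathcal G^0$ is $s\cdot g=sgs^*$, so $s^*\cdot e=s^*es=s^*s$. Then
\[
(sf)(e)=s\,f(s^*\cdot e)=s\,f(s^*s)=s\,s^*s\,f(e)=s\,f(e),
\]
where the third equality uses $K\mathcal G^0$-linearity of $f$ (note $s^*s=1_{V\inv V}\in K\mathcal G^0$), and the last uses $ss^*s=s$ in $S$, which still holds in $K\mathcal G$.

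The only delicate point is keeping track of which action of $S$ on $K\mathcal G^0$ is being used, namely the conjugation action noted before Proposition~\ref{p:unit}, as opposed to multiplication. Once the conjugation action is used in the computation above, the argument is direct. Linearity over sums of elements $1_V$ then follows because the action on $\Hom$ was already shown to factor through $K\mathcal G$.
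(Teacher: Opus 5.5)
Your proposal is correct and follows the paper's own argument. Both proofs show that evaluation at $1$ is a $K\mathcal G^0$-isomorphism, then check $K\mathcal G$-linearity on $s\in S$ via $(sf)(1)=s f(s^*1s)=sf(s^*s)=ss^*sf(1)=sf(1)$, using the conjugation action of $S$ on $K\mathcal G^0$.
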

\begin{proof}
    There is a $K\mathcal G^0$-module isomorphism taking $f\colon \mathcal G^0\to M$ to $f(1)$.  Let's show it is a $K\mathcal G$-module isomorphism.   Indeed, if $s\in S$, then $(sf)(1) = sf(s^*1s) = sf(s^*s) = ss^*sf(1) = sf(1)$.  
\end{proof}

We now show that the hom-tensor adjunction holds at the level of $K\mathcal G$-modules.

\begin{Prop}\label{p:hom.tensor}
    Let $L,M,N$ be $K\mathcal G$-modules.   Then there are natural isomorphisms of $K$-modules $\Hom_{K\mathcal G}(L,\Hom_{K\mathcal G^0}(M,N))\cong \Hom_{K\mathcal G}(L\otimes_{K\mathcal G^0} M,N)\cong \Hom_{K\mathcal G}(M,\Hom_{K\mathcal G^0}(L,N))$.
\end{Prop}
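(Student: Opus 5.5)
The plan is to obtain these isomorphisms from the ordinary hom-tensor adjunction over the commutative ring $K\mathcal G^0$, and then check that the $K\mathcal G$-linear maps correspond under it. Since $K\mathcal G^0$ is commutative and every $K\mathcal G$-module restricts to a $K\mathcal G^0$-module, the classical adjunction gives a natural $K$-isomorphism
\[\Phi\colon \Hom_{K\mathcal G^0}(L\otimes_{K\mathcal G^0}M,N)\to \Hom_{K\mathcal G^0}(L,\Hom_{K\mathcal G^0}(M,N)),\qquad \Phi(\varphi)(l)(m)=\varphi(l\otimes m).\]
Because $K\mathcal G\cong KS/I$, a $K\mathcal G^0$-linear map between $K\mathcal G$-modules is $K\mathcal G$-linear if and only if it commutes with the action of every $s\in S$. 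So the first isomorphism reduces to showing that $\varphi$ is $S$-equivariant exactly when $\Phi(\varphi)$ is.

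First, suppose $\varphi$ is $K\mathcal G$-linear. Then
\[\Phi(\varphi)(sl)(m)=\varphi(sl\otimes m)\quad\text{and}\quad (s\Phi(\varphi)(l))(m)=s\varphi(l\otimes s^*m)=\varphi(sl\otimes ss^*m).\]
Since the tensor product is over $K\mathcal G^0$ and $ss^*\in K\mathcal G^0$, we get $sl\otimes ss^*m=ss^*sl\otimes m=sl\otimes m$, so the two expressions agree.

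Conversely, suppose $\psi=\Phi(\varphi)$ is $K\mathcal G$-linear. Then
\[\varphi(s(l\otimes m))=\varphi(sl\otimes sm)=\psi(sl)(sm)=(s\psi(l))(sm)=s\psi(l)(s^*sm).\]
Here $s^*s\in K\mathcal G^0$ and $\psi(l)$ is $K\mathcal G^0$-linear, so this equals $s s^*s\,\psi(l)(m)=s\varphi(l\otimes m)$. These identities hold on basic tensors and extend additively, so $\varphi$ is $S$-equivariant. Naturality is inherited from the classical adjunction.

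For the second isomorphism, use the symmetry $L\otimes_{K\mathcal G^0}M\cong M\otimes_{K\mathcal G^0}L$, $l\otimes m\mapsto m\otimes l$. This is a $K\mathcal G$-module isomorphism because the diagonal action $s(l\otimes m)=sl\otimes sm$ is symmetric. Applying the first isomorphism with the roles of $L$ and $M$ exchanged then gives $\Hom_{K\mathcal G}(L\otimes_{K\mathcal G^0}M,N)\cong\Hom_{K\mathcal G}(M,\Hom_{K\mathcal G^0}(L,N))$.

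The main point needing care is the equivariance computation in both directions. It depends on moving the idempotents $ss^*$ and $s^*s$ across tensors and through $K\mathcal G^0$-linear maps, together with the identities $s=ss^*s$ and $s^*=s^*ss^*$. The rest is formal.
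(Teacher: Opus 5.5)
Your proof is correct and is essentially the paper's argument. The paper identifies all three Hom sets with the $K\mathcal G^0$-bilinear maps $f\colon L\times M\to N$ satisfying $f(s\ell,sm)=sf(\ell,m)$, and it carries out the same two equivariance computations you give: moving $ss^*$ or $s^*s$ across the tensor product or through $K\mathcal G^0$-linear maps, then using $ss^*s=s$. It obtains the third isomorphism by symmetry (``dual to the first''), just as your swap $L\otimes_{K\mathcal G^0}M\cong M\otimes_{K\mathcal G^0}L$ does.
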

\begin{proof}
    Indeed, all three $K$-modules are isomorphic to the $K$-module of $K\mathcal G^0$-bilinear maps $f\colon L\times M\to N$ satisfying $f(s\ell,sm) = sf(\ell,m)$ for $s\in S$.   

    Since this is clear for the middle hom set, and the third hom set is dual to the first, we just handle the first hom set.  If $f\colon L\times M\to N$ satisfies the above property, define $F\colon L\to \Hom_{K\mathcal G}(M,N)$ by $F(\ell)(m) = f(\ell,m)$.  Then $F(s\ell) = f(s\ell, m)= f(ss^*s\ell,m) = f(s\ell,ss^*m) = sf(\ell, s^*m) =sF(\ell)(s^*m) = (sF(\ell))(m)$, where we have used that $f$ is $K\mathcal G^0$-bilinear in the third equality.

    Conversely, if $F\colon L\to \Hom_{K\mathcal G^0}(M,N)$ is a homomorphism and $f(\ell,m)=F(\ell)(m)$, then $f(s\ell,sm) = F(s\ell)(sm) = (sF(\ell))(sm) = sF(\ell)(s^*sm) = ss^*sF(\ell)(m) = sF(\ell)(m) = sf(\ell,m)$.
\end{proof}

Our first corollary of the hom-tensor adjunction is the following.

\begin{Cor}
    If $M,N$ are $K\mathcal G$-modules, then there is a  natural isomorphism $\Hom_{K\mathcal G}(M,N)\cong \Hom_{K\mathcal G}(K\mathcal G^0,\Hom_{K\mathcal G^0}(M,N))$.
\end{Cor}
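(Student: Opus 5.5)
The plan is to apply Proposition~\ref{p:hom.tensor} with $L=K\mathcal G^0$ and then remove the tensor factor using Proposition~\ref{p:unit}. Concretely, the first isomorphism of Proposition~\ref{p:hom.tensor}, taken with $L=K\mathcal G^0$ and with $M,N$ as given, reads
\[\Hom_{K\mathcal G}(K\mathcal G^0,\Hom_{K\mathcal G^0}(M,N))\cong \Hom_{K\mathcal G}(K\mathcal G^0\otimes_{K\mathcal G^0} M,N).\]
This isomorphism is natural in all three variables.

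Next, Proposition~\ref{p:unit} gives a natural $K\mathcal G$-module isomorphism $K\mathcal G^0\otimes_{K\mathcal G^0}M\cong M$, given by $f\otimes m\mapsto fm$. Applying the functor $\Hom_{K\mathcal G}(-,N)$ to it yields
\[\Hom_{K\mathcal G}(K\mathcal G^0\otimes_{K\mathcal G^0}M,N)\cong \Hom_{K\mathcal G}(M,N),\]
again naturally in $M$ and $N$. Composing the two isomorphisms gives the desired natural isomorphism $\Hom_{K\mathcal G}(M,N)\cong \Hom_{K\mathcal G}(K\mathcal G^0,\Hom_{K\mathcal G^0}(M,N))$.

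The only point that needs any care is naturality. This reduces to the naturality statements already contained in Propositions~\ref{p:hom.tensor} and~\ref{p:unit}, together with the fact that a composite of natural isomorphisms is natural. Explicitly, the composite sends a $K\mathcal G$-homomorphism $\varphi\colon M\to N$ to the map $f\mapsto (m\mapsto \varphi(fm))$. One can check directly that this map commutes with morphisms in $M$ and in $N$.

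As an alternative route, the same result also follows from the third isomorphism of Proposition~\ref{p:hom.tensor} combined with Proposition~\ref{p:counit}. There is no real obstacle in either route.
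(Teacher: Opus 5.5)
Your proposal is correct and is exactly the paper's argument: take $L=K\mathcal G^0$ in Proposition~\ref{p:hom.tensor} and replace $K\mathcal G^0\otimes_{K\mathcal G^0}M$ by $M$ using Proposition~\ref{p:unit}. Your explicit description of the composite, $\varphi\mapsto\bigl(f\mapsto(m\mapsto\varphi(fm))\bigr)$, is also correct.
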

\begin{proof}
    By Propositions~\ref{p:unit} and~\ref{p:hom.tensor} we have isomorphisms $\Hom_{K\mathcal G}(M,N)\cong \Hom_{K\mathcal G}(K\mathcal G^0\otimes_{K\mathcal G^0}M,N)\cong \Hom_{K\mathcal G}(K\mathcal G^0,\Hom_{K\mathcal G^0}(M,N))$.
\end{proof}

Another consequence of the hom-tensor adjunction is that the tensor product of projective modules is projective when $\mathcal G$ is paracompact (and Hausdorff).  
The following is a slight variation of~\cite[Lemma~2.13]{BDGW}.

\begin{Lemma}\label{l:CDGW}
    Let  $\mathcal G$ be paracompact ample Hausdorff groupoid with compact unit space.  Then $K\mathcal G$ is a projective $K\mathcal G^0$-module.  Consequently, any projective $K\mathcal G$-module is also a projective $K\mathcal G^0$-module.  
\end{Lemma}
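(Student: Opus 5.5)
The plan is to show that $K\mathcal G$, viewed as a left $K\mathcal G^0$-module through multiplication by functions on the unit space, is a direct sum of cyclic modules $K\mathcal G^0 1_U$ with $U\subseteq \mathcal G^0$ compact open. Each such cyclic module is a direct summand of the free module $K\mathcal G^0$, since $1_U$ is an idempotent and $\mathcal G^0$ is compact, so $K\mathcal G^0$ is unital. The decomposition itself will come from paracompactness, used to split $\mathcal G$ into disjoint compact open bisections.

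\textbf{Step 1.} Since $\mathcal G$ is Hausdorff, ample and paracompact, and the compact open bisections form a basis, I will partition $\mathcal G$ into pairwise disjoint compact open bisections, $\mathcal G=\bigsqcup_{i\in I}V_i$. The standard route is that a paracompact, locally compact Hausdorff space is a disjoint union of $\sigma$-compact clopen subspaces. Each of these is then exhausted by an increasing sequence of compact open sets $C_1\subseteq C_2\subseteq\cdots$. The compact open differences $C_{n+1}\setminus C_n$ can each be cut into finitely many disjoint compact open bisections, because compact open bisections form a basis and finite Boolean combinations of compact open sets in a Hausdorff space are compact open. This is the step I expect to be the main obstacle, although it is the standard argument from~\cite{BDGW}.

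\textbf{Step 2.} Every element of $K\mathcal G$ is locally constant with compact support. Its support therefore meets only finitely many $V_i$, so $K\mathcal G=\bigoplus_{i}K V_i$ as $K$-modules, where $KV_i$ denotes the functions supported on $V_i$. Each $KV_i$ is a left $K\mathcal G^0$-submodule, because for $f\in K\mathcal G^0$ and $g$ supported on $V_i$ we have $(f\ast g)(\gamma)=f(\ran(\gamma))g(\gamma)$. Using that $\ran|_{V_i}\colon V_i\to \ran(V_i)$ is a homeomorphism onto a compact open set, the map $g\mapsto g\circ(\ran|_{V_i})^{-1}$ is a $K\mathcal G^0$-isomorphism $KV_i\cong K\ran(V_i)=K\mathcal G^0 1_{\ran(V_i)}$. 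This last module is projective because it is a summand of $K\mathcal G^0$. Hence $K\mathcal G$ is a projective $K\mathcal G^0$-module.

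\textbf{Step 3.} For the consequence, let $P$ be a projective $K\mathcal G$-module. Then $P$ is a direct summand of a free module $\bigoplus K\mathcal G$; here $K\mathcal G$ is unital because $\mathcal G^0$ is compact. Restriction of scalars to $K\mathcal G^0$ preserves direct sums and summands, so $P$ is a summand of a direct sum of projective $K\mathcal G^0$-modules, and is therefore projective over $K\mathcal G^0$.
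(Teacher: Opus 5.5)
Your proposal is correct and follows essentially the same route as the paper: use paracompactness to partition $\mathcal G$ into pairwise disjoint compact open bisections $U_\alpha$, then identify $K\mathcal G\cong\bigoplus_\alpha K\mathcal G^0 1_{U_\alpha}\cong\bigoplus_\alpha K\mathcal G^0 1_{\ran(U_\alpha)}$ as $K\mathcal G^0$-modules. You additionally spell out how the partition is constructed and why the ``consequently'' clause follows (restriction of scalars preserves direct sums and summands), both of which the paper leaves implicit.
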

\begin{proof}
    Since $\mathcal G$ is paracompact locally compact Hausdorff and totally disconnected space, it has a  covering by pairwise disjoint compact open bisections $\{U_{\alpha}\}_{\alpha \in A}$. Then $K\mathcal G\cong \bigoplus_{\alpha\in A} K\mathcal G^01_{U_\alpha}\cong \bigoplus_{\alpha\in A} K\mathcal G^01_{\ran(U_\alpha)}$ as a $K\mathcal G^0$-module.  
\end{proof}

\begin{Cor}\label{p:tensor.proj}
Let $\mathcal G$ be an ample groupoid with compact unit space.  Let $P,Q$ be $K\mathcal G$-modules with $P$ projective over $K\mathcal G$ and $Q$ projective over $K\mathcal G^0$.  Then $P\otimes_{K\mathcal G^0}Q$ is projective over $K\mathcal G$.      
\end{Cor}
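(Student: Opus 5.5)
We reduce to the case where $P$ is a direct summand of a free module, via the hom-tensor adjunction of Proposition~\ref{p:hom.tensor}. The key claim is that $\Hom_{K\mathcal G}(P\otimes_{K\mathcal G^0}Q,-)$ is exact. By Proposition~\ref{p:hom.tensor} there is a natural isomorphism
\[\Hom_{K\mathcal G}(P\otimes_{K\mathcal G^0}Q,N)\cong \Hom_{K\mathcal G}(P,\Hom_{K\mathcal G^0}(Q,N)).\]
Since $Q$ is projective over $K\mathcal G^0$, the functor $N\mapsto \Hom_{K\mathcal G^0}(Q,N)$ is exact. Since $P$ is projective over $K\mathcal G$, the functor $\Hom_{K\mathcal G}(P,-)$ is exact. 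Their composite is therefore exact, and hence $P\otimes_{K\mathcal G^0}Q$ is projective over $K\mathcal G$.

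Two bookkeeping points need checking. First, $\Hom_{K\mathcal G^0}(Q,N)$ carries the $K\mathcal G$-module structure constructed before Proposition~\ref{p:counit}. Second, the isomorphism of Proposition~\ref{p:hom.tensor} must be natural in $N$, so that a short exact sequence $0\to N'\to N\to N''\to 0$ of $K\mathcal G$-modules yields a short exact sequence of $\Hom$'s. Naturality in $N$ is clear from the description via $S$-equivariant $K\mathcal G^0$-bilinear maps: postcomposition with $N\to N'$ commutes with all identifications. Exactness of $N\mapsto\Hom_{K\mathcal G^0}(Q,N)$ is exactness on underlying $K$-modules, since the $K\mathcal G$-action is extra structure and a sequence of $K\mathcal G$-modules is exact if and only if it is exact as $K$-modules.

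Surjectivity, i.e.\ lifting, is the only nontrivial part. It also suffices, because left exactness of $\Hom$ is automatic and a module $M$ is projective as soon as $\Hom_{K\mathcal G}(M,-)$ preserves surjections.

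The main obstacle I expect is ensuring that everything is genuinely well defined as $K\mathcal G$-modules, i.e.\ that the $KS$-action descends through the relations $1_U+1_V=1_{U\cup V}$. This was already verified in the paragraphs preceding Proposition~\ref{p:unit} and Proposition~\ref{p:counit}, so nothing further is needed.

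No paracompactness hypothesis on $\mathcal G$ is used here. Paracompactness enters only when this corollary is combined with Lemma~\ref{l:CDGW} to make $Q$ itself a projective $K\mathcal G$-module, for instance to conclude that the tensor product of two projective $K\mathcal G$-modules is projective.
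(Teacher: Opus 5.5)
Your argument is correct and is essentially the paper's proof: Proposition~\ref{p:hom.tensor} identifies $\Hom_{K\mathcal G}(P\otimes_{K\mathcal G^0}Q,-)$ with $\Hom_{K\mathcal G}(P,\Hom_{K\mathcal G^0}(Q,-))$, which is a composite of two exact functors. Your opening sentence about reducing to the case where $P$ is a direct summand of a free module is never used in your argument and can be dropped.
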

\begin{proof}
    We have by Proposition~\ref{p:hom.tensor} that \[\Hom_{K\mathcal G}(P\otimes_{K\mathcal G^0}Q,-)\cong \Hom_{K\mathcal G}(P,\Hom_{K\mathcal G^0}(Q,-)),\] and the right hand side is exact as $P$ is projective over $K\mathcal G$ and $Q$ is projective over $K\mathcal G^0$.
\end{proof}

Assume now that $\mathcal G$ is a paracompact Hausdorff ample groupoid with compact unit space.
Notice that if $M,N$ are $K\mathcal G$-modules and $P_\bullet\to M$ is a projective resolution over $K\mathcal G$, then it is a projective resolution over $K\mathcal G^0$ by Lemma~\ref{l:CDGW}.  Therefore, $H^n(\Hom_{K\mathcal G^0}(P_\bullet,N))=\Ext^n_{K\mathcal G^0}(M,N)$, and it has a $K\mathcal G$-module structure, which is easily seen to be independent of the projective resolution by a standard argument.  

We now construct a spectral sequence connecting $\mathrm{Ext}$ with groupoid cohomology.    A similar spectral sequence for computing Hochschild cohomology appears in~\cite{dks25}, but they work with the cohomology of the inverse semigroup of compact open bisections.

\begin{Thm}\label{t:ss}
    Let $\mathcal G$ be a paracompact Hausdorff ample groupoid with compact unit space.  Then there is a convergent spectral sequence
    \[\Ext^p_{K\mathcal G}(K\mathcal G^0,\Ext^q_{K\mathcal G^0}(M,N))\Rightarrow \Ext_{K\mathcal G}^{p+q}(M,N)\]
    for $K\mathcal G$-modules $M,N$.
\end{Thm}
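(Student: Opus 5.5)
This is a Grothendieck spectral sequence for a composite of functors, built from the Corollary $\Hom_{K\mathcal G}(M,N)\cong \Hom_{K\mathcal G}(K\mathcal G^0,\Hom_{K\mathcal G^0}(M,N))$. Fix $N$ and work contravariantly in $M$; equivalently, use a double complex. Choose a projective resolution $P_\bullet\to M$ over $K\mathcal G$. By Lemma~\ref{l:CDGW} it is also a projective resolution over $K\mathcal G^0$. Choose an injective resolution $N\to I^\bullet$ over $K\mathcal G$. Consider the first-quadrant double complex
\[C^{p,q}=\Hom_{K\mathcal G}(Q_p,\Hom_{K\mathcal G^0}(P_q,N)),\]
where $Q_\bullet\to K\mathcal G^0$ is a projective resolution over $K\mathcal G$. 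Here $\Hom_{K\mathcal G^0}(P_q,N)$ carries the $K\mathcal G$-module structure constructed above.

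First filtration. Take cohomology in $q$ first. Since $Q_p$ is projective, $\Hom_{K\mathcal G}(Q_p,-)$ is exact and commutes with cohomology. The $q$-cohomology of $\Hom_{K\mathcal G^0}(P_\bullet,N)$ is $\Ext^q_{K\mathcal G^0}(M,N)$ as a $K\mathcal G$-module, as remarked before the theorem. So $E_1^{p,q}=\Hom_{K\mathcal G}(Q_p,\Ext^q_{K\mathcal G^0}(M,N))$ and
\[E_2^{p,q}=\Ext^p_{K\mathcal G}(K\mathcal G^0,\Ext^q_{K\mathcal G^0}(M,N)).\]
One point needs checking: the $K\mathcal G$-structure on the cohomology must agree with the one induced by the differentials, which are $K\mathcal G$-linear maps of the modules $\Hom_{K\mathcal G^0}(P_q,N)$. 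This holds by naturality of the action $(sf)(m)=sf(s^*m)$.

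Second filtration. Take cohomology in $p$ first. By the hom-tensor adjunction (Proposition~\ref{p:hom.tensor}),
\[C^{p,q}\cong \Hom_{K\mathcal G}(Q_p\otimes_{K\mathcal G^0}P_q,N).\]
The $p$-cohomology is then $\Ext^p_{K\mathcal G}(K\mathcal G^0,\Hom_{K\mathcal G^0}(P_q,N))$. I need this to vanish for $p>0$ and to equal $\Hom_{K\mathcal G}(P_q,N)$ for $p=0$ (the latter by the Corollary). Use the adjunction in the other slot:
\[\Hom_{K\mathcal G}(Q_\bullet\otimes_{K\mathcal G^0}P_q,N)\cong\Hom_{K\mathcal G}(P_q,\Hom_{K\mathcal G^0}(Q_\bullet,N)).\]
Reduce to $P_q$ being a direct summand of a free module $K\mathcal G^{(J)}$. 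Then $\Hom_{K\mathcal G}(K\mathcal G,\Hom_{K\mathcal G^0}(Q_\bullet,N))=\Hom_{K\mathcal G^0}(Q_\bullet,N)$. Its cohomology is $\Ext_{K\mathcal G^0}^\bullet(K\mathcal G^0,N)$, since $Q_\bullet$ is a $K\mathcal G^0$-projective resolution by Lemma~\ref{l:CDGW}. This vanishes in positive degrees because $K\mathcal G^0$ is $K\mathcal G^0$-free. Products and summands preserve the vanishing, so $\Hom_{K\mathcal G}(P_q,-)$ applied to this complex stays exact except in degree $0$.

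Assembling. The second spectral sequence therefore collapses to the column $p=0$, giving the cohomology of $\Hom_{K\mathcal G}(P_\bullet,N)$, which is $\Ext^{n}_{K\mathcal G}(M,N)$. Comparing the two filtrations of the total complex gives the convergent spectral sequence. The double complex is first-quadrant, so convergence is automatic.

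Main obstacle. The hardest step is the collapse of the second filtration. Specifically, one must verify that $\Hom_{K\mathcal G^0}(P_q,N)$ is acyclic for $\Ext_{K\mathcal G}(K\mathcal G^0,-)$ when $P_q$ is $K\mathcal G$-projective. As a cleaner alternative for this step, I would note that $Q_p\otimes_{K\mathcal G^0}P_q$ is $K\mathcal G$-projective by Corollary~\ref{p:tensor.proj}, and that $Q_\bullet\otimes_{K\mathcal G^0}P_q$ resolves $K\mathcal G^0\otimes_{K\mathcal G^0}P_q\cong P_q$ (Proposition~\ref{p:unit}), because $P_q$ is $K\mathcal G^0$-projective and hence flat. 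Then $\Hom_{K\mathcal G}(-,N)$ of this resolution computes $\Ext_{K\mathcal G}^\bullet(P_q,N)$, which is $0$ in positive degrees. This gives the collapse directly.
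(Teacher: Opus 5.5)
Your proposal is correct and is essentially the paper's proof. The paper uses the same double complex, written as $\Hom_{K\mathcal G}(P_p\otimes_{K\mathcal G^0}Q_q,N)$ with the letters for the two resolutions swapped, reads off the $E_2$-term from the first filtration exactly as you do, and collapses the second filtration because $\Hom_{K\mathcal G}$ out of the projective terms of the resolution of $M$ is exact and $\Ext^{q}_{K\mathcal G^0}(K\mathcal G^0,N)=0$ for $q>0$. Your reduction to free modules is an unnecessary detour (that exactness already suffices), the injective resolution of $N$ is never used, and your alternative collapse via Corollary~\ref{p:tensor.proj} is also valid.
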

\begin{proof}
    Let $P_\bullet\to K\mathcal G^0$ and $Q_\bullet\to M$ be projective resolutions over $K\mathcal G$.  Consider the double cochain complex $C^{p,q} = \Hom_{K\mathcal G}(P_p\otimes_{K\mathcal G^0}Q_q,N)$.   Let us compute the spectral sequence of the first filtration.  We have \[C^{p,\bullet} = \Hom_{K\mathcal G}(P_p\otimes_{K\mathcal G^0} Q_\bullet,N)\cong \Hom_{K\mathcal G}(P_p,\Hom_{K\mathcal G^0}(Q_\bullet,N))\] which has cohomology $\Hom_{K\mathcal G}(P_p,\Ext^q_{K\mathcal G^0}(M,N))$ since $P_p$ is projective for all $p$.  This complex in turn has cohomology $\Ext^p_{K\mathcal G}(K\mathcal G^0,\Ext^q_{K\mathcal G^0}(M,N))$.

    For the second filtration we compute 
    \[C^{\bullet,p}= \Hom_{K\mathcal G}(P_\bullet\otimes_{K\mathcal G^0} Q_p,N)\cong \Hom_{K\mathcal G}(Q_p,\Hom_{K\mathcal G^0}(P_\bullet,N)).\] Since $Q_p$ is projective, the cohomology of this is $\Hom_{K\mathcal G}(Q_p,\Ext^q_{K\mathcal G^0}(K\mathcal G^0,N))$, which then has cohomology $\Ext^p_{K\mathcal G}(M,\Ext^q_{K\mathcal G^0}(K\mathcal G^0,N))$.  But  \[\Ext^q_{K\mathcal G^0}(K\mathcal G^0,N) \cong\begin{cases} N, & \text{if}\ q=0\\ 0, & \text{if}\ q>0 \end{cases}\] by Proposition~\ref{p:counit}.  Thus this spectral sequence collapses on the axis $q=0$, and so we deduce the total complex has homology $\Ext^n_{K\mathcal G}(M,N)$.  This completes the proof of the theorem.
\end{proof}

Next, we need an observation about $KX$ when $X$ is a second countable compact Hausdorff totally disconnected space. Recall that a ring $R$ is (von Neumann) regular~\cite{Goodearlreg} if, for all $a\in R$, there exists $b\in R$ with $aba=a$.

\begin{Thm}\label{t:kaplansky}
Let $X$ be a second countable compact Hausdorff totally disconnected space.  Then $KX$ is hereditary (as a ring with pointwise product).
\end{Thm}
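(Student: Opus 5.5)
Since $X$ is second countable, compact and totally disconnected, the Boolean algebra $\mathcal B$ of clopen subsets of $X$ is countable. The ring $KX$ is commutative and von Neumann regular: for any $f\in KX$, let $g(x)=f(x)^{-1}$ where $f(x)\neq 0$ and $g(x)=0$ otherwise. Then $g$ is locally constant and $fgf=f$. Consequently every finitely generated ideal of $KX$ is principal and generated by an idempotent, namely $1_U$ for a clopen set $U$. Indeed, $(f)=(1_{\mathrm{supp}(f)})$, and $(1_U,1_V)=(1_{U\cup V})$. Hence every finitely generated ideal is a direct summand of $KX$ and so is projective. The plan is to exploit countability to show that every ideal is countably generated, and then to invoke Kaplansky's theorem that, over a regular ring, countably generated ideals are projective. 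To keep the argument self-contained, I would prove that special case directly.

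\textbf{Countable generation.} Let $L\subseteq KX$ be an ideal. Since $KX$ is regular, $L$ is idempotent-generated: for $f\in L$ we have $1_{\mathrm{supp} f}=gf\in L$ and $f=f1_{\mathrm{supp} f}$. So $L$ is generated by the idempotents $1_U\in L$, and these $U$ range over a subset of the countable set $\mathcal B$. Enumerate these sets as $U_1,U_2,\dots$. If the list is finite, then $L=(1_{U_1\cup\cdots\cup U_n})$ is principal, so it is a direct summand of $KX$ and we are done.

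\textbf{Projectivity.} Put $W_n=U_1\cup\cdots\cup U_n$, which is clopen with $1_{W_n}\in L$, and set $D_n=W_n\setminus W_{n-1}$ (with $W_0=\emptyset$). The sets $D_n$ are pairwise disjoint clopen sets, and $1_{D_n}\in L$. The key claim is $L=\bigoplus_n KX1_{D_n}$. The sum is direct because the idempotents $1_{D_n}$ are orthogonal. It is all of $L$ because any $f\in L$ lies in some $(1_{W_n})$: $\mathrm{supp} f$ is a clopen set in $L$, so it equals some $U_k$, and then $f=f1_{W_k}=\sum_{n\le k}f1_{D_n}$. Each summand $KX1_{D_n}$ is a direct summand of $KX$, hence projective, and therefore $L$ is projective. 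So $KX$ is hereditary.

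The main point needing care is the identity $L=\bigoplus_n KX1_{D_n}$, which amounts to showing that $L$ is the increasing union of the principal ideals $(1_{W_n})$ and then orthogonalizing. Second countability is used only to make $\mathcal B$ countable, which lets us build the chain $W_1\subseteq W_2\subseteq\cdots$ in the first place. Compactness is used only so that $1_X\in KX$, i.e.\ the ring is unital and $KX1_{D}$ is a direct summand. I would double-check that compactness is not needed elsewhere.
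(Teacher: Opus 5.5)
Your proof is correct and follows the paper's route: regularity of $KX$ plus countability of the clopen sets makes every ideal countably generated, and Kaplansky's theorem finishes. The paper simply cites Kaplansky's theorem (from Goodearl), whereas you prove the needed commutative case by hand by orthogonalizing the chain $1_{W_n}$, which also gives the explicit decomposition $L=\bigoplus_n KX1_{D_n}$.

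One correction to your closing remark: compactness is also used to make $\mathcal B$ countable, since each clopen set is compact and hence a finite union of basic open sets. For discrete $\mathbb N$, which is second countable but not compact, the clopen sets are uncountable. Without compactness you would therefore have to work with the still countable family of compact open sets instead.
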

\begin{proof}
    Note first that $KX$ is regular.  Indeed, if $f\in KX$ and one defines $g\in KX$ by \[g(x) = \begin{cases}f(x)\inv, & \text{if}\ f(x)\neq 0\\ 0, & \text{else,}\end{cases}\] then $fgf=f$.   Since $X$ is second countable $KX$ has countable dimension over $K$ (being spanned by the characteristic functions of the countably many compact open sets).  Hence, all ideals in $KX$ are countably generated.  A theorem of Kaplansky says that any countably generated left ideal in a regular ring is projective~\cite[Corollary~2.15]{Goodearlreg}.  We conclude that $KX$ is hereditary. 
\end{proof}

We remark that second countability is needed here.  There are examples of boolean rings of arbtirary global dimension~\cite{PIERCE196791}.  Every boolean ring is of the form $\mathbb F_2X$ for a compact Hausdorff and totally disconnected space $X$, where $\mathbb F_2$ is the two element field.   

We can now prove Theorem~C.
\begin{proof}[Proof of Theorem~C]
    Let $\mathcal G=F_A\ltimes X$.  Note that $\mathcal G=\bigcup_{w\in F_A}\{w\}\times X_{w\inv}$ is a covering of $\mathcal G$ by pairwise disjoint compact open bisections, from which it follows that $\mathcal G$ is paracompact.   Let $L$ be a left ideal of $K\mathcal G$.  We show that $L$ has projective dimension at most $1$; it will then follow $K\mathcal G$ has global dimension at most $2$.  First note that $L$ is projective as a $KX$-module since $KX$ is hereditary by Theorem~\ref{t:kaplansky} and $L$ is a submodule of a projective $KX$-module by Lemma~\ref{l:CDGW}.    Therefore, the spectral sequence in Theorem~\ref{t:ss} (with $M=L$) collapses on the axis $q=0$, from which we conclude that $\Ext^n_{K\mathcal G}(L,N)\cong \Ext^n_{K\mathcal G}(K\mathcal G^0,\Hom_{K\mathcal G^0}(L,N))$.  By Corollary~\ref{c:dim.one} and Lemma~\ref{l:pd.drops}, we have that $\pd_{K\mathcal G} K\mathcal G^0\leq 1$. It follows that $\Ext^2_{K\mathcal G}(L,N)=0$ for all modules $N$.  Thus $L$ has projective dimension at most $1$.
\end{proof}

I conjecture that under the hypothesis of Theorem~C the global dimension is at most one, i.e., $K[F_A\ltimes X]$ is hereditary. 

\begin{Cor}
    Let $(X,T)$ be a Deaconu-Renault system with $X$ second countable compact Hausdorff and totally disconnected and $\dom T$ clopen.  Then $K\mathcal G_{(X,T)}$ has global dimension at most $2$ for any field $K$.
\end{Cor}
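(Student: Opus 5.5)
This follows by combining Theorem~\ref{t:castro} with Theorem~C. First, apply Theorem~\ref{t:castro}, or more precisely its proof. Since $\dom T$ is clopen in the compact space $X$, it is compact, so we can partition it into finitely many clopen sets $U_1,\ldots,U_n$ on which $T$ is injective. This yields a semi-saturated orthogonal partial action $\theta$ of the free group $F_A$, with $A=\{a_1,\ldots,a_n\}$, on $X$, together with an isomorphism of topological groupoids $\mathcal G_{(X,T)}\cong F_A\ltimes X$. The space on which $F_A$ acts is the same space $X$, so it is still second countable, compact, Hausdorff and totally disconnected.

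Second, an isomorphism of Hausdorff ample groupoids induces an isomorphism of their $K$-algebras of compactly supported locally constant functions, by composing functions with the homeomorphism of arrow spaces; this respects convolution because the isomorphism respects the range map and multiplication. Hence $K\mathcal G_{(X,T)}\cong K[F_A\ltimes X]$, and global dimension is invariant under ring isomorphism.

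Finally, Theorem~C applies to $K[F_A\ltimes X]$ for any field $K$ and shows it has global dimension at most $2$. Therefore so does $K\mathcal G_{(X,T)}$.

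None of these steps should be an obstacle. The only points to check are that the hypotheses transfer: Theorem~\ref{t:castro} produces the action on the original space $X$ rather than on some auxiliary space, so second countability carries over, and Theorem~C places no restriction on $A$, which here is finite in any case.
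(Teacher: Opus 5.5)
Your proposal is correct and matches the paper's intended argument: the paper states this corollary right after Theorem~C as a direct consequence of Theorem~C combined with Theorem~\ref{t:castro}. The proof of Theorem~\ref{t:castro} realizes $\mathcal G_{(X,T)}$ as $F_A\ltimes X$ for a semi-saturated partial action on the same space $X$, so second countability carries over, and the points you check (the action lives on the original $X$, and a groupoid isomorphism induces an isomorphism of the algebras) are exactly the ones the argument needs.
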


The following result about the algebra of a free inverse monoid seems to be new. 

\begin{Cor}
    The algebra of a free inverse monoid over a field has global dimension at most $2$.  
\end{Cor}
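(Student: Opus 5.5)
The plan is to realize the algebra $K[FIM_A]$ of the free inverse monoid as (isomorphic to) the algebra of an ample groupoid of the form $F_A\ltimes X$, where $X$ is second countable, compact, Hausdorff and totally disconnected, and the partial action is semi-saturated. Theorem~C then applies directly.

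\textbf{Step 1: the groupoid model.} For any inverse monoid $S$ with semilattice of idempotents $E$, the standard result from \cite{mygroupoidalgebra} is that $KS\cong K\mathcal G(S)$. Here $\mathcal G(S)=S\ltimes \wh E$ is the universal groupoid: the groupoid of germs of the Munn-type action of $S$ on the spectrum $\wh E$. The spectrum $\wh E$ is the space of nonzero filters (characters) of $E$ with the patch topology. Since $S$ is a monoid, $\wh E$ is compact, because the identity $1$ gives a compact open set containing everything. It is always Hausdorff and totally disconnected. I would take $S=FIM_A$.

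\textbf{Step 2: checking the hypotheses on $X$ and applying Theorem~C.} I would apply Proposition~\ref{p:fim} with $X=\wh E$ and $\theta_a$ the partial homeomorphism by which the generator $a$ acts. The domain of $\theta_a$ is the clopen set of characters with value $1$ on $a^*a$. This gives $\mathcal G(FIM_A)=FIM_A\ltimes \wh E\cong F_A\ltimes \wh E$, where the latter is the semi-saturated partial action determined by $\{\theta_a\}$. One subtlety is that the universal groupoid is the groupoid of germs of the action on $\wh E$; Proposition~\ref{p:fim} is exactly about that groupoid of germs, so this matches. Second countability of $\wh E$ holds when $E$ is countable, since the topology has the countable basis of sets $D(e)\cap D(f_1)^c\cap\cdots$. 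This is the case when $A$ is countable. Theorem~C then gives global dimension at most $2$.

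\textbf{Main obstacle: uncountable $A$.} Here $\wh E$ is not second countable, so Theorem~\ref{t:kaplansky} is not available. My first attempt would be a direct-limit or reduction argument. Global dimension can be tested on cyclic modules $R/L$ (Auslander's theorem), and each such module is built from finitely generated data over $K[FIM_B]$ for countable subsets $B\subseteq A$. Moreover, $K[FIM_A]$ is free as a module over $K[FIM_B]$ (on both sides, by a normal-form argument on Munn trees), so induction preserves projectivity and projective dimension does not increase. This reduction is not automatic for arbitrary left ideals. The fallback is to state the corollary under the assumption that $A$ is countable, or to replace Theorem~\ref{t:kaplansky} by a proof that each left ideal of $KX$ has projective dimension at most $1$ in the relevant setting. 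I expect this uncountable case to be the only real obstacle. The countable case follows directly from Proposition~\ref{p:fim} together with Theorem~C.
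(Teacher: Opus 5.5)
Your Steps~1 and~2 are exactly the paper's proof. You use the isomorphism $KFIM_A\cong K[FIM_A\ltimes \wh{E(FIM_A)}]$ from \cite[Theorem~6.3]{mygroupoidalgebra}, then Proposition~\ref{p:fim} to identify this groupoid of germs with $F_A\ltimes \wh{E(FIM_A)}$ for the semi-saturated partial action determined by the $\theta_a$, then Theorem~C. Your check of second countability is correct. Your concern about uncountable $A$ is legitimate, and the paper does not address it either. The paper's proof invokes Theorem~C without verifying its second countability hypothesis, and that hypothesis holds exactly when $A$ is countable. Indeed, distinct idempotents $e$ give distinct clopen sets $D(e)$, while a compact second countable zero-dimensional space has only countably many clopen subsets. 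So both your argument and the paper's, as written, establish the corollary only for countable $A$.

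Your sketch for uncountable $A$ is not a proof and should not be presented as one.
\begin{itemize}
\item Auslander's theorem forces you to treat arbitrary left ideals $L$. A left ideal that is not countably generated is not of the form $K[FIM_A]\otimes_{K[FIM_B]}L_B$ with $B$ countable. So flatness or freeness of $K[FIM_A]$ over $K[FIM_B]$, which you assert without proof, gives no control over $\pd(K[FIM_A]/L)$.
\item Such countable reductions genuinely fail in a closely parallel situation. Every unital Boolean ring is the directed union of its countable unital subrings, each hereditary by Theorem~\ref{t:kaplansky}. Yet Boolean rings can have arbitrarily large global dimension \cite{PIERCE196791}.
\item Without second countability, the paper's tools (Theorem~A, Lemma~\ref{l:pd.drops}, Theorem~\ref{t:ss}) still yield $\pd_{K\mathcal G}M\le 1+\pd_{KX}M$ for every $K\mathcal G$-module $M$, where $X=\wh{E(FIM_A)}$ and $\mathcal G=F_A\ltimes X$. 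What is missing is exactly the fact that left ideals of $K[FIM_A]$ are projective over $KX$. The paper obtains this from Kaplansky's theorem, which needs second countability.
\end{itemize}
Your fallback, stating the corollary for countable $A$, is the correct formulation of what is actually proved.
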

\begin{proof}
Let $K$ be a field.    For any inverse monoid $M$, there is an action of $M$  on the spectrum $\wh {E(M)}$ of its idempotents, which is a compact totally disconnected Hausdorff space, such that $KM\cong K[M\ltimes \wh {E(M)}]$~\cite[Theorem~6.3]{mygroupoidalgebra}.  Applying this to the free inverse monoid and using Proposition~\ref{p:fim}, we see that $KFIM_A\cong K[F_A\ltimes \wh{E(FIM_A)}]$ for a semi-saturated partial action of $F_A$ on $\wh{E(FIM_A)}$.   Thus the global dimension of $KFIM_A$ is at most $2$ by Theorem~C.
\end{proof}

I conjecture $KFIM_A$ is hereditary.
 
\bibliographystyle{abbrv}
\bibliography{standard2}
\end{document}